\def\thetitle{{Anti-trees and right-angled Artin subgroups of braid groups}}
\let\@@enum@org\@@enum@
\def\@@enum@[#1]{\@@enum@org[\normalfont #1]}
\newtheorem{thm}{Theorem}
\newtheorem{lem}[thm]{Lemma}
\newtheorem{cor}[thm]{Corollary}
\theoremstyle{remark}
\newtheorem*{rem}{Remark}
\theoremstyle{definition}
\newtheorem{defn}[thm]{Definition}
\newcommand\co{\colon}
\newcommand\bZ{\mathbb{Z}}
\newcommand\Aut{\operatorname{Aut}}
\newcommand\PMod{\operatorname{PMod}}
\newcommand\supp{\operatorname{supp}}
\newcommand\lk{\operatorname{Lk}}
\newcommand\st{\operatorname{St}}
\newcommand\Gam{\Gamma}
\newcommand\Mod{\operatorname{Mod}}
\newcommand\Id{\operatorname{Id}}
\newcommand\cP{\mathcal{P}}
\newcommand\cG{\mathcal{G}}
\newcommand\yt{\widetilde}
\newcommand\diff{\ensuremath{{\mathrm{Symp}(D^2,\partial D^2)}}}
\newcommand\symps{\ensuremath{{\mathrm{Symp}(S^2)}}}
\begin{document}

\title\thetitle

\author{Sang-hyun Kim}
\address{Department of Mathematical Sciences, Seoul National Univeristy, Seoul 151-747, Republic of Korea}
\email{s.kim@snu.ac.kr}

\author{Thomas Koberda}
\address{Department of Mathematics, Yale University, 20 Hillhouse Ave, New Haven, CT 06520, USA}
\email{thomas.koberda@gmail.com}
\date{\today}
\keywords{right-angled Artin group, braid group, cancellation theory, hyperbolic manifold, quasi-isometry}
\begin{abstract}
We prove that an arbitrary right-angled Artin group $G$ admits a quasi-isometric group embedding into a right-angled Artin group defined by the opposite graph of a tree,
and consequently, into a pure braid group.
It follows that $G$ is a quasi-isometrically embedded subgroup of the area-preserving diffeomorphism groups of the 2--disk and of the 2--sphere with $L^p$-metrics for suitable $p$.
Another corollary is that, there exists a closed hyperbolic manifold group of each dimension which admits a quasi-isometric group embedding into a pure braid group.
Finally, we show that the isomorphism problem, conjugacy problem, and membership problem are unsolvable in the class of finitely presented subgroups of braid groups.
\end{abstract}

\maketitle
\section{Introduction}
A \emph{right-angled Artin group} on a finite graph $\Gamma$ is a group generated by the vertices of $\Gamma$ with commutation relations defined by the adjacency relations in $\Gamma$.  In this article, we adopt the opposite of the usual convention used for discussing right-angled Artin groups, and we write \[{G(\Gamma)}\cong\langle V(\Gamma)\mid [v_i,v_j]=1 \textrm{ if and only if } \{v_i,v_j\}\notin E(\Gamma)\rangle.\]  Here, $V(\Gamma)$ and $E(\Gamma)$ denote the set of vertices and edges of $\Gamma$.
\subsection{Main results}
For two groups $G$ and $H$ equipped with metrics, a \emph{quasi-isometric group embedding from $G$ to $H$} (with quasi-isometry constant $C$) is an injective group homomorphism $f\co G\to H$  such that the following estimate holds for every $x$ and $y$ in $G$: \[d_G(x,y)/C-C\le d_H(f(x),f(y))\le C d_G(x,y) + C.\] 
We will also say that $G$ is a \emph{quasi-isometrically embedded subgroup} of $H$. 
We will endow a word-metric to each finitely generated group. Note that every quasi-isometric group embedding between finitely generated groups is bi-Lipschitz.

The principal result of this article is the following:

\begin{thm}\label{t:main}
For each finite graph $\Gamma$, there exists a finite tree $T$ such that ${G(\Gamma)}$ admits a quasi-isometric group embedding into $G(T)$.
\end{thm}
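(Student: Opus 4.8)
The plan is to pass to complements and reduce the statement to a single combinatorial universality property of anti-trees. Under the reversed convention of this paper, $G(\Gamma)$ is the ordinary right-angled Artin group $A(\Gamma^c)$ on the complement graph, and $G(T)$ is $A(T^c)$, the group on the \emph{anti-tree} $T^c$. Since complementation is a bijection on finite graphs, it suffices to prove that for every finite graph $\Lambda$ there is a finite tree $T$ admitting a quasi-isometric embedding $A(\Lambda)\hookrightarrow A(T^c)$. I would first record why the reversed convention is doing essential work: in the standard convention a tree is triangle-free, so $A(T)$ contains no $\bZ^3$; hence one genuinely needs the dense graph $T^c$ as target, and no embedding into a standard tree RAAG could exist for a general $\Lambda$.

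Next I would isolate the two elementary undistorted operations available for RAAGs and see precisely where they fail. Inclusion of an induced subgraph gives an isometric (indeed retract) embedding of the associated RAAGs, and, in the reversed convention, co-contraction of a subset $B$ that is connected in $\Gamma$ gives an undistorted embedding $G(\Gamma/B)\hookrightarrow G(\Gamma)$, where $\Gamma/B$ is the ordinary graph contraction. The trouble is that both operations, applied to a tree $T$, only ever produce forests: deleting vertices or contracting connected subsets of $T$ cannot create a cycle. Dually, co-contracting the anti-tree $T^c$ produces only complements of forests. Thus these moves alone cannot realize the cyclic adjacency patterns of a general $\Lambda$, and this is exactly the obstacle the proof must overcome: one must manufacture cycles inside an anti-tree RAAG.

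To create cycles I would bring in the extension-graph embedding theorem: if $\Lambda$ embeds as an induced subgraph of the extension graph $\Omega^e$, then $A(\Lambda)$ embeds in $A(\Omega)$. Theorem~\ref{t:main} then reduces to the combinatorial assertion that anti-trees have \emph{universal} extension graphs, namely that for each finite $\Lambda$ there is a tree $T$ with $\Lambda$ an induced subgraph of $(T^c)^e$. I would prove this by constructing $T$ explicitly from $\Lambda$---a sufficiently long subdivided caterpillar seems natural, so that $T^c$ is dense and its extension graph is richly self-similar---and then realizing the vertices of $\Lambda$ by carefully chosen conjugates $g_i v_i g_i^{-1}$ of the standard generators of $A(T^c)$. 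The heart of the argument is to tune the conjugating elements so that two chosen conjugates commute \emph{exactly} when the corresponding vertices of $\Lambda$ are non-adjacent: one must rule out both accidental extra commutations and missing ones, which is where the recursive structure of the extension graph and a normal-form or ping-pong analysis of $A(T^c)$ would be used.

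Finally I would address the quasi-isometry, which I expect to be the main obstacle. Induced-subgraph inclusions are isometric, but an extension-graph embedding sends generators to conjugates and is a priori distorted, so I must upgrade it to a bi-Lipschitz embedding. The plan is to arrange the conjugating elements $g_i$ to have uniformly bounded length and to construct an explicit Lipschitz retraction $A(T^c)\to A(\Lambda)$ onto the image, so that the image is an undistorted, convex subgroup; by the remark in the introduction a quasi-isometric group embedding between finitely generated groups is automatically bi-Lipschitz, so it is enough to control word lengths in both directions. Producing such a retraction uniformly, simultaneously with the exact-commutation control of the previous step, is the delicate part and is where I would expect to spend most of the effort.
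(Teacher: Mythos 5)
Your approach is genuinely different from the paper's, and it has a fatal gap at its central reduction. You reduce Theorem~\ref{t:main} to the claim that anti-trees have universal extension graphs: for every finite graph $\Lambda$ (standard convention) there is a tree $T$ with $\Lambda$ an induced subgraph of $(T^c)^e$. That claim is false, already for $\Lambda=C_5$. An induced subgraph of a tree is a forest, while for $n\ge 5$ the complement $C_n^c$ always contains a cycle ($C_5^c\cong C_5$, and for $n\ge 6$ the vertices $1,3,5$ span a triangle in $C_n^c$); hence no anti-tree $T^c$ contains an induced cycle of length at least $5$. But the structure theory of extension graphs (Kim--Koberda, \emph{Embedability between right-angled Artin groups}, the paper cited here as \cite{KK2013}) shows that an induced cycle of length $n\ge 5$ in $\Gamma^e$ forces an induced cycle of length at least $5$ in $\Gamma$ itself. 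Applied to $\Gamma=T^c$, this shows $(T^c)^e$ contains no induced $C_5$ for any tree $T$. Note also that your fallback -- realizing the vertices of $\Lambda$ by conjugates $g_iv_ig_i^{-1}$ (or powers thereof) whose commutations match $\Lambda$ exactly -- is literally the same thing as embedding $\Lambda$ into $(T^c)^e$, so it is ruled out by the same obstruction. This is precisely the known phenomenon that extension-graph embeddings do not account for all embeddings between right-angled Artin groups: the map $G(C_5)\to G(L_8)$ of \cite{CDK2013}, which the paper cites as its motivating example, is a counterexample of exactly this kind, since $G(C_5)$ embeds in an anti-tree group but $C_5$ never embeds in the extension graph of an anti-tree.

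This failure also explains why the paper's proof looks the way it does, and why it cannot be run through conjugates of generators. The paper works in the universal cover $p\co\yt\Gamma\to\Gamma$ and sends each generator $v$ of ${G(\Gamma)}$ to the product $\prod_{t\in p^{-1}(v)\cap T}t$ of \emph{several commuting} generators of $G(T)$, where $T\le\yt\Gamma$ is a suitable finite subtree; as the introduction stresses, these images are never pure factors, which places the map outside the reach of extension-graph machinery. Injectivity and the lower quasi-isometric bound are then obtained not from a retraction onto the image -- none is constructed, and your hoped-for Lipschitz retraction is exactly the kind of structure that is unavailable here -- but from a cancellation argument: Lemma~\ref{l:key} enlarges $T$ by finitely many deck-transformation translates so that the homomorphic image of any reduced word admits no cancellation of a prescribed set $F$ of lifts with $p(F)=V(\Gamma)$ (the ``$F$-surviving'' condition), and Lemma~\ref{l:surv-qi} converts this into the estimate $\|\phi(\Gamma,T)(w)\|\ge\|w\|$, with the upper bound $\|\phi(\Gamma,T)(w)\|\le |V(T)|\,\|w\|$ being automatic. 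Your peripheral observations (the role of the reversed convention, the insufficiency of induced-subgraph and co-contraction moves, and the automatic bi-Lipschitz property) are correct, but the core of your plan rests on a false combinatorial statement and cannot be repaired within the extension-graph framework.
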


In the usual convention for describing right-angled Artin groups, the tree $T$ should be replaced with its opposite graph $T^{opp}$, a graph which could reasonably be called an \emph{anti-tree}.  
The tree $T$ in Theorem \ref{t:main} is not produced in an ad hoc manner, but is rather built as a subtree of the universal cover of $\Gamma$.

Let us summarize some consequences of the main theorem. The reader is referred to the last section for more details and background.

We write $B_n$ for the \emph{braid group} on $n$--strands, which is identified with the mapping class group of the $n$--punctured disk $D_n$ fixing the boundary pointwise.
The \emph{pure braid group} $P_n<B_n$ is the kernel of the natural puncture permutation representation \[B_n\to S_n<\Aut(H_1(D_n,\bZ)).\]

We let $\diff$ be the group of area-preserving diffeomorphisms (\emph{symplectomorphisms}) of the unit 2--disk that are the identity in a neighborhood of the boundary. We endow $\diff$ with the $L^p$--metric $d_p$; see Section~\ref{ss:sph} and~\ref{ss:disk}.
Crisp and Wiest \cite{CW2007} proved that the right-angled Artin group on each planar graph (more generally, each \emph{planar type} graph) admits quasi-isometric group embeddings into $P_n$ for some $n$, and into $(\diff,d_2)$.
They asked whether or not an arbitrary right-angled Artin group embeds into $P_n$ for some $n$~\cite{CW2004}.

We denote by $\symps$ the area-preserving diffeomorphism group of the unit 2--sphere $S^2$, again equipped with the $L^p$--metric. Note that $\symps$ is equal to the group of Hamiltonian symplectomorphisms on $S^2$~\cite[1.4.H]{Polterovich2001}. M.~Kapovich showed that every right-angled Artin group embeds into $\symps$, asking whether or not this embedding can be chosen to be quasi-isometric with respect to $L^2$-metric~\cite{Kapovich2012}.
We generalize the result of Crisp and Wiest, and give an affirmative answer to the $L^p$-version of Kapovich's question for $p>2$.

\begin{cor}\label{c:braid}
Every right-angled Artin group admits quasi-isometric group embeddings into the following groups:
\begin{enumerate}
\item
$P_n$ for some $n$,
\item
$\diff$ with $L^p$-metric for $1\le p\le\infty$,
\item
$\symps$ with $L^p$-metric for $2<p\le \infty$.
\end{enumerate}
\end{cor}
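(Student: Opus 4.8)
The plan is to derive all three quasi-isometric embeddings as consequences of Theorem~\ref{t:main}, reducing everything to the single task of quasi-isometrically embedding $G(T)$ for a tree $T$. Since quasi-isometric group embeddings compose (the composition of two bi-Lipschitz injective homomorphisms is again bi-Lipschitz and injective), it suffices to produce, for an \emph{arbitrary} tree $T$, a quasi-isometric group embedding of $G(T)$ into each target group; composing with the embedding $G(\Gamma)\hookrightarrow G(T)$ from Theorem~\ref{t:main} then yields the corollary for every right-angled Artin group. So the core work is entirely about right-angled Artin groups on trees.

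\medskip

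For part~(1), the key observation is that a tree is a planar graph. Crisp and Wiest \cite{CW2007} proved that $G(\Gamma)$ admits a quasi-isometric group embedding into some pure braid group $P_n$ whenever $\Gamma$ is planar (indeed of planar type). Since $T$ is planar, their theorem applies directly to $G(T)$, and composing with Theorem~\ref{t:main} gives the embedding $G(\Gamma)\hookrightarrow P_n$. First I would state precisely the Crisp--Wiest embedding result and verify that trees satisfy their planarity hypothesis, then invoke composition. This handles the main braid-group assertion and answers the Crisp--Wiest question from \cite{CW2004} in full generality.

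\medskip

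For parts~(2) and~(3), the plan is again to combine Theorem~\ref{t:main} with the Crisp--Wiest symplectic constructions and with the known comparisons between the groups and metrics involved. Crisp and Wiest show that each planar-type right-angled Artin group embeds quasi-isometrically into $(\diff, d_2)$; I would first establish the $\diff$ statement for $G(T)$ using planarity of $T$, then address the range of $p$. The transition between different values of $p$ and between the disk $D^2$ and the sphere $S^2$ is where I expect the technical heart to lie. For the disk, I would argue that once the embedding is quasi-isometric for a single exponent, the relation between the $L^p$-metrics $d_p$ on the fixed finite-area domain allows one to transfer the lower and upper bounds across the whole range $1\le p\le\infty$; the upper bound is typically the soft direction (diffeomorphisms have bounded support and the metrics are comparable up to constants on such controlled families), while the lower bound requires a geometric quantity (a linking or crossing number type invariant) that grows linearly in word length and is controlled from below in $L^p$. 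For the sphere, the subtlety is that $\symps = \ham$ by \cite[1.4.H]{Polterovich2001}, and one must push the disk embedding into $S^2$ while preserving the quasi-isometric lower bound; the restriction to $p>2$ should appear precisely here, reflecting that the lower bound on the sphere survives only when $p$ is large enough for the relevant energy estimate to dominate.

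\medskip

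The main obstacle, then, is the lower bound in parts~(2) and~(3): producing a quasimorphism or displacement-energy estimate on $(\diff,d_p)$ and $(\symps,d_p)$ that is bounded below by a positive multiple of the word length in $G(T)$, uniformly over the range of $p$ claimed, and tracking how the exponent constraint $p>2$ emerges on the sphere. I would isolate this as a lemma feeding the constants of the quasi-isometric embedding, deferring the detailed energy computations to Section~\ref{ss:sph} and~\ref{ss:disk}, and then assemble the three parts of the corollary by composing with Theorem~\ref{t:main}.
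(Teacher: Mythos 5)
Your reduction to trees via Theorem~\ref{t:main} and your treatment of part~(1) (trees are planar, so Lemma~\ref{l:planar} / the Crisp--Wiest embedding applies, and quasi-isometric group embeddings compose) is exactly the paper's argument. The gaps are in parts~(2) and~(3). For part~(2), your plan to start from the Crisp--Wiest embedding into $(\diff,d_2)$ and ``transfer the lower and upper bounds across the whole range $1\le p\le\infty$'' fails for $1\le p<2$: on a finite-area surface, H\"older's inequality gives $\ell_q(\alpha)\le C\,\ell_p(\alpha)$ only for $q\le p$, so $d_q\le C\,d_p$ and a word-length lower bound in terms of $d_q$ propagates only \emph{upward} in the exponent. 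A $d_2$ lower bound therefore covers only $2\le p\le\infty$. This is precisely why the paper proves the key estimate at the bottom exponent: it establishes the analogue of Lemma~\ref{l:est} at $p=1$ on the disk, using Brandenbursky's integral-geometric estimates \cite{Brandenbursky2012} (averaging the braid word length $\|[\ell(\alpha,x)]\|$ of traced-out configurations over a small polydisk), and then H\"older carries the lower bound up to all $p\ge 1$. The upper bound and injectivity are the soft directions, handled once and for all by Lemma~\ref{l:gp}.

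For part~(3), your mechanism --- ``push the disk embedding into $S^2$ while preserving the quasi-isometric lower bound'' --- is a step that would fail, or at least that you cannot justify: a path in $\symps$ realizing (or nearly realizing) $d_p$ between two elements of $\diff\subset\symps$ need not be supported in the disk, so the inclusion $(\diff,d_p)\hookrightarrow(\symps,d_p)$ is not known to be a quasi-isometric embedding, and nothing in your outline substitutes for this. The paper instead works on the sphere directly: the generators $\psi_v$ of Lemma~\ref{l:planar} are chosen area-preserving on $S^2$, so the embedding factors as $f_1\co{G(\Gamma)}\to\cP_n(S^2)\subset\symps$, and the lower bound is Lemma~\ref{l:est}, namely $\|[\phi]\|\le C\|\phi\|_p+C$ for $\phi\in\cP_n(S^2)$ and $p>2$, proved via the Brandenbursky--Shelukhin estimate \cite{BS2013}; this is also exactly where the hypothesis $p>2$ enters (their Lemmas 1 and 2 require it), not in any disk-to-sphere comparison. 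An additional wrinkle your outline misses entirely: $\symps\simeq SO(3)$ is not simply connected, so one must pass to the double cover $\tilde\cG$, relate isotopy classes of paths to braids in $P_n(S^2)$, and quotient by the $\bZ_2$ kernel of the point-pushing map $P_n(S^2)\to\PMod(S_{0,n})$ before comparing with the word metric coming from Theorem~\ref{t:main}.
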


Although largely influenced by their papers (\cite{CW2007}, ~\cite{Kapovich2012}), our proof does not rely on the above mentioned results of Crisp--Wiest and Kapovich.
In Corollary~\ref{c:braid} part (1), our analysis tells us that the number of strands $n$ can be chosen so that $\log_2 \log_2 n\le m^2$ where $m$ is the number of the generators of the right-angled Artin group. 

Corollary \ref{c:braid} part (1) can be applied to recover an algebraic result of Baudisch concerning the structure of two--generated subgroups of right-angled Artin groups:

\begin{cor}[See \cite{Baudisch1981}]\label{c:baud}
Every two--generated subgroup of a right-angled Artin group is either abelian or free.
\end{cor}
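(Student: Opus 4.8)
The plan is to deduce this from Corollary~\ref{c:braid}(1). Let $H=\langle a,b\rangle$ be a two-generated subgroup of a right-angled Artin group $G(\Gamma)$. By Corollary~\ref{c:braid}(1) there is a group embedding $G(\Gamma)\hookrightarrow P_n$, under which $H$ becomes a two-generated subgroup of the pure braid group $P_n$. Since $H$ is isomorphic to its image, it suffices to prove the self-contained assertion that \emph{every two-generated subgroup of $P_n$ is free or abelian}. I would use three standard features of $P_n$: it is torsion-free; it is bi-orderable (a classical fact, e.g.\ because it is residually torsion-free nilpotent); and $P_n<\Mod(D_n)$, so that the Tits alternative for mapping class groups (Ivanov and McCarthy, building on Birman--Lubotzky--McCarthy) applies to $H$.

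By the Tits alternative, $H$ is either virtually abelian or contains a free group of rank two. I first claim that in the virtually abelian case $H$ is in fact abelian, hence (being torsion-free and two-generated) trivial, $\bZ$, or $\bZ^2$. Indeed, let $A$ be a free abelian normal subgroup of finite index in $H$. Conjugation by any $g\in H$ restricts to an order-preserving automorphism of $A$ of finite order; but an order-preserving automorphism of finite order of a totally ordered abelian group is the identity, so $A$ is central in $H$. A group with a central subgroup of finite index has finite commutator subgroup by Schur's theorem, and this is trivial here by torsion-freeness, so $H$ is abelian. This settles the virtually abelian case.

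It remains to treat the case where $H$ contains a free group of rank two; here I must upgrade this to the statement that $H$ is itself free on two generators, and this is the heart of the matter and the main obstacle. Soft group theory does not suffice: a two-generated group containing a rank-two free subgroup need not be free (for instance a suitable ascending $F_k\rtimes\bZ$), and ping-pong with pseudo-Anosov elements only yields free \emph{subgroups} rather than identifying $H$ on the nose. I would instead induct on $n$ through the Fadell--Neuwirth tower $P_n\to P_{n-1}$, whose kernel is a free group $F$. The image $\overline H$ of $H$ in $P_{n-1}$ is free or abelian by induction, and $H\cap F$ is free; the delicate subcase is when $\overline H$ is nonabelian free and $H\cap F$ is nontrivial, giving $H$ as an extension of a free group by a free group, which must be shown to be free. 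Ruling out the intermediate abelian-by-free and free-by-free possibilities is exactly where the bi-ordering (which forbids, e.g., Klein-bottle and other generalized-torsion relations) and the precise action on the fiber must be used. Granting this, every two-generated subgroup of $P_n$ is free or abelian, and since $H\le G(\Gamma)$ is isomorphic to its image in $P_n$, the corollary follows.
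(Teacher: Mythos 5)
Your reduction to the pure braid group is exactly the paper's first step: embed $G(\Gamma)$ into $P_n$ via Corollary~\ref{c:braid}(1) and transfer the question to two-generated subgroups of $P_n$. But at that point the paper simply invokes the theorem of Leininger and Margalit \cite{LM2010}, which states precisely that every two-generator subgroup of a pure braid group is abelian or free; the whole content of the corollary, beyond the embedding, is that citation. You instead attempt to prove the $P_n$ statement from scratch, and that attempt has a genuine gap at its central point. Your handling of the virtually abelian branch of the Tits alternative is correct (conjugation in a bi-ordered group is order-preserving, a finite-order order-preserving automorphism of a totally ordered group is trivial, then Schur plus torsion-freeness gives abelianness). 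The problem is the other branch: knowing that $H$ \emph{contains} a nonabelian free group, you must show $H$ \emph{is} free, and your own text concedes this with ``Granting this\ldots''. The Fadell--Neuwirth induction you sketch does not close the gap: in the subcase where the image $\overline{H}$ in $P_{n-1}$ is nonabelian free and $H\cap F$ is nontrivial, you obtain a two-generated free-by-free extension, and no soft property you list (torsion-freeness, bi-orderability, residual torsion-free nilpotence) forces such an extension to be free. Ruling this out requires the geometric analysis of canonical reduction systems and the specific way $H$ acts on the fiber, which is exactly the hard work done in \cite{LM2010}; it is a paper-length argument, not a step one can wave through.

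So the structure of your proposal is the same as the paper's, but where the paper cites a known deep theorem, you have an unproven claim standing in its place. The fix is either to cite Leininger--Margalit at that point (which collapses your proof to the paper's two-line argument), or to actually supply the missing free-by-free analysis, which is far beyond the scope of this corollary.
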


A group $G$ is called \emph{special} if $G = \pi_1(X)$ for some compact CAT(0) cube complex $X$ that locally isometrically embeds into the Salvetti complex of a right-angled Artin group. If a group has a finite-index special subgroup, it is called \emph{virtually special}~\cite{HW2008}.
The class of virtually special groups is surprisingly large, including all the finite-volume hyperbolic 3--manifold groups~\cite{HW2008,Wise2011,Agol2013}.

\begin{cor}\label{c:vsp}
A special group admits a quasi-isometric group embedding into a pure braid group. 
\end{cor}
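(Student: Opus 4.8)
The plan is to prove Corollary~\ref{c:vsp} by combining the definition of a special group with Theorem~\ref{t:main} and Corollary~\ref{c:braid} part (1). Let $G$ be a special group, so that $G = \pi_1(X)$ for a compact nonpositively curved cube complex $X$ that admits a local isometry into the Salvetti complex of some right-angled Artin group $G(\Gamma)$. The standard consequence of this definition, due to Haglund and Wise, is that the induced map on fundamental groups is injective and, moreover, that $G$ embeds as a \emph{quasi-isometrically embedded} subgroup of $G(\Gamma)$: a local isometry of compact nonpositively curved cube complexes lifts to an isometric embedding of CAT(0) universal covers, so the orbit map of $G$ into $G(\Gamma)$ is a quasi-isometric embedding with respect to the respective word metrics. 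I would cite this as the key input from \cite{HW2008}.

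Once this is in place, the argument is a short composition of quasi-isometric embeddings. First I would record that $G$ quasi-isometrically embeds into $G(\Gamma)$ for the relevant defining graph $\Gamma$. Next, Theorem~\ref{t:main} supplies a finite tree $T$ and a quasi-isometric group embedding $G(\Gamma) \hookrightarrow G(T)$. Finally, Corollary~\ref{c:braid} part (1) gives a quasi-isometric group embedding of $G(T)$ (indeed of any right-angled Artin group) into some pure braid group $P_n$. Since a composition of quasi-isometric group embeddings is again a quasi-isometric group embedding — the defining inequalities compose, up to an adjustment of the constant $C$, and injectivity is preserved under composition — the composite $G \hookrightarrow P_n$ is the desired embedding.

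I expect the only genuine subtlety to be the first step, namely verifying carefully that specialness yields a \emph{quasi-isometric} (and not merely injective) embedding of $G$ into $G(\Gamma)$. The point is that a local isometry between compact CAT(0) cube complexes is $\pi_1$-injective and induces an isometric embedding on universal covers, whence the combinatorial distance in $G$ (measured in its word metric, quasi-isometric to the intrinsic path metric on the universal cover $\widetilde{X}$) is comparable to distance measured in $G(\Gamma)$. This is precisely the content of the convexity/local-isometry results in \cite{HW2008}, so the claim reduces to an appeal to that theory rather than to a new computation. The remaining steps are formal, since quasi-isometric group embeddings are closed under composition, and the statement follows immediately.
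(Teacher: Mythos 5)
Your proof is correct and follows exactly the route the paper intends: the paper leaves Corollary~\ref{c:vsp} without an explicit proof precisely because it is the composition you describe, namely the Haglund--Wise fact that a special group is a quasi-isometrically embedded subgroup of a right-angled Artin group (via the local isometry to the Salvetti complex lifting to an isometric embedding of universal covers), followed by Corollary~\ref{c:braid} part (1). Your attention to the one genuine subtlety --- that specialness gives a \emph{quasi-isometric}, not merely injective, embedding into the right-angled Artin group --- is exactly the right point to isolate.
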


In particular, every finite-volume hyperbolic 3--manifold group is virtually a quasi-isometrically embedded subgroup of a pure braid group. 
Theorem \ref{t:main} also furnishes examples of quasi-isometrically embedded higher dimensional closed hyperbolic manifold subgroups of a pure braid group, which to the authors' knowledge are the first such examples.

\begin{cor}\label{c:hypn}
For each $n\geq 2$, there exists an $n$--dimensional closed hyperbolic manifold whose fundamental group admits a quasi-isometric group embedding into a pure braid group.
\end{cor}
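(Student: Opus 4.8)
The plan is to reduce the statement to Corollary~\ref{c:vsp} by producing, in each dimension $n \geq 2$, a closed hyperbolic $n$--manifold whose fundamental group is special after passing to a finite cover. First I would fix $n \geq 2$ and invoke the standard arithmetic construction of closed hyperbolic $n$--manifolds of \emph{simple type}: taking an anisotropic quadratic form of signature $(n,1)$ over a totally real number field that is positive definite at every other real place (for instance $q = x_0^2 + \cdots + x_{n-1}^2 - \sqrt{2}\,x_n^2$ over $\mathbb{Q}(\sqrt 2)$), the associated arithmetic lattice in $\mathrm{SO}(n,1)$ is cocompact and torsion--free after passing to a congruence subgroup. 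This yields a closed hyperbolic $n$--manifold $M$ for every $n \geq 2$, so the dimension range in the statement is covered.

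The key input is the theorem of Bergeron, Haglund, and Wise that the fundamental group of a standard arithmetic hyperbolic manifold of simple type is virtually special. Applying this to $M$, there is a finite--index subgroup $H \leq \pi_1(M)$ that is special. Since $H$ has finite index in a closed hyperbolic manifold group, it is itself the fundamental group of a finite cover $M' \to M$; in particular $M'$ is again a closed hyperbolic $n$--manifold and $\pi_1(M') \cong H$ is special. Note that this step is exactly where I replace the original manifold by a cover, which lets me upgrade \emph{virtual} specialness of $\pi_1(M)$ to honest specialness of the group $\pi_1(M')$ that I actually feed into the next step; there is no need to compare $H$ with $\pi_1(M)$ quasi-isometrically.

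It then remains to apply Corollary~\ref{c:vsp} to the special group $H = \pi_1(M')$, which furnishes a quasi-isometric group embedding of $\pi_1(M')$ into a pure braid group. As $M'$ is a closed hyperbolic $n$--manifold, it is the manifold required by the statement. I expect the main obstacle to be securing virtual specialness \emph{uniformly in all dimensions}: in dimensions $2$ and $3$ this is classical and follows from cubulation together with Agol's theorem, but in higher dimensions it rests squarely on the Bergeron--Haglund--Wise result for manifolds of simple type, and one must take care that the arithmetic construction indeed produces cocompact (hence closed) and torsion--free examples of simple type in every dimension $n \geq 2$.
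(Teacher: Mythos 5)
Your proposal is correct and follows essentially the same route as the paper: the paper's argument for arbitrary $n$ is precisely to cite Bergeron--Haglund--Wise for virtual specialness of (standard/simple-type arithmetic) closed hyperbolic $n$--manifold groups and then apply Corollary~\ref{c:vsp}, which is what you do, with the arithmetic construction and the passage to a finite cover spelled out in more detail. The only difference is that the paper also records alternative low-dimensional sources of examples (Bugaenko's cocompact reflection groups for $n\le 8$ and the all-right 120--cell in dimension 4), which your uniform argument does not need.
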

In particular, we will see that the fundamental group of a manifold cover of the 4--dimensional all-right hyperbolic 120--cell orbifold quasi-isometrically embeds into a pure braid group. This answers a question by Crisp and Wiest in \cite{CW2007}.

Recall that the \emph{isomorphism problem} asks if, given a class $\mathfrak{C}$ of finitely presented groups, whether there exists an algorithm which on an input of two members $A,B\in\mathfrak{C}$, halts and determines whether or not $A\cong B$.  In \cite{Bridson2012}, Bridson uses right-angled Artin subgroups of mapping class groups to show that the isomorphism problem is unsolvable when $\mathfrak{C}$ is the class of finitely presented subgroups of a sufficiently high genus surface mapping class group.  Theorem \ref{t:main} allows us to replace the surface mapping class group with a braid group:

\begin{cor}\label{c:isoprob}
Let $\mathfrak{C}_n$ be class of finitely presented subgroups of the planar braid group $B_n$.  If $n$ is sufficiently large then the isomorphism problem for $\mathfrak{C}_n$ is unsolvable.
\end{cor}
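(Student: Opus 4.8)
The plan is to transport Bridson's construction verbatim from the mapping class group to the braid group by means of Corollary~\ref{c:braid} part~(1). The key observation is that Bridson's argument in \cite{Bridson2012} does not really use $\Mod(\Sigma_g)$ beyond the fact that it contains a certain right-angled Artin group as a subgroup: the undecidability is manufactured entirely inside that right-angled Artin group. Concretely, what one extracts from \cite{Bridson2012} is a single right-angled Artin group $A = G(\Gamma_0)$ together with a recursive family of finite presentations $\langle S_k\mid R_k\rangle$ defining subgroups $H_k\le A$ for which the set $\{(k,m): H_k\cong H_m\}$ is not recursive. These $H_k$ arise as fibre products attached to a Rips-type short exact sequence $1\to N\to G\to Q\to 1$, with $G$ a subgroup of a right-angled Artin group and $Q$ a finitely presented group of unsolvable word problem, so that finite presentability of the $H_k$ follows from the $1$--$2$--$3$ theorem and an isomorphism question about the $H_k$ encodes the word problem of $Q$. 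Thus the isomorphism problem is already unsolvable for the class of finitely presented subgroups of $A$.

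Next I would invoke Corollary~\ref{c:braid} part~(1), which supplies an injective group homomorphism $\phi\co A\hookrightarrow P_n\le B_n$ for some $n$; here the quasi-isometry estimate plays no role, and the only feature of $\phi$ I need is that it is explicit, in the sense that it sends each generator of $A$ to a concrete braid word. This is the case for the embeddings produced by Theorem~\ref{t:main} and Corollary~\ref{c:braid}, since they are built by a finite, algorithmic procedure (indeed with the effective bound $\log_2\log_2 n\le m^2$). Composing, each $H_k$ is carried isomorphically onto a finitely presented subgroup $\phi(H_k)\le B_n$, and a finite presentation of $\phi(H_k)$ is computed from $\langle S_k\mid R_k\rangle$ simply by substituting for each generator its image braid word. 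Since $\phi$ is injective, $\phi(H_k)\cong\phi(H_m)$ if and only if $H_k\cong H_m$, so the recursive family $\{\phi(H_k)\}$ of members of $\mathfrak{C}_n$ inherits the unsolvability of the isomorphism problem, proving the corollary for this particular $n$.

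Finally, to pass from this single $n$ to all sufficiently large $n$, I would use the standard strand-adding inclusions $B_n\hookrightarrow B_{n+1}$, under which every finitely presented subgroup of $B_n$ is also one of $B_{n+1}$; hence $\mathfrak{C}_n\subseteq\mathfrak{C}_{n+1}$ and the undecidable family persists. I expect the only genuinely delicate point to be the bookkeeping that makes the whole reduction effective: one must check that Bridson's family $\{H_k\}$ is genuinely recursive as a sequence of presentations, and that the images $\{\phi(H_k)\}$ are produced by an algorithm uniformly in $k$. Both are routine once one observes that $\phi$ is a fixed, finitely specified homomorphism, but they are precisely the steps where the argument would break down if the embedding of Theorem~\ref{t:main} were not algorithmically explicit.
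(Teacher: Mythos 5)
Your proposal is correct and takes essentially the same route as the paper: it quotes Bridson's undecidability result for finitely presented subgroups of a right-angled Artin group and transports it into $B_n$ through the embedding furnished by Theorem~\ref{t:main} and Corollary~\ref{c:braid}(1), with strand-adding inclusions handling all sufficiently large $n$. One remark: your worry about algorithmic explicitness of the embedding is superfluous rather than essential --- since the isomorphism problem concerns abstract groups given by finite presentations, the presentation $\langle S_k\mid R_k\rangle$ of $H_k$ is already a presentation of the member $\phi(H_k)\cong H_k$ of $\mathfrak{C}_n$, so the mere existence of an injective homomorphism suffices and the argument would not break down even for a non-explicit embedding.
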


Recall that the \emph{conjugacy problem} in a finitely presented group $H$ asks whether there exists an algorithm which on an input of two elements $g,h\in H$, halts and determines whether or not $g$ and $h$ are conjugate to each other in $H$.  The \emph{membership problem} for a subgroup $K<H$ asks whether there exists an algorithm which on an input of a word $w$ in the generators of $H$, halts and determines whether or not the element of $H$ represented by $w$ lies in $K$.  With this setup, Theorem \ref{t:main} has the following consequence for braid groups:

\begin{cor}\label{c:membership}
For all $n$ sufficiently large, there exists a finitely presented subgroup $H<B_n$ such that the conjugacy problem for $H$ is unsolvable and for which the membership problem is unsolvable.
\end{cor}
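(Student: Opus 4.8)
The plan is to combine two classical sources of algorithmic undecidability in combinatorial group theory with the embedding furnished by Corollary~\ref{c:braid}(1). The essential observation is that, although a right-angled Artin group itself has uniformly solvable word, conjugacy, and membership problems, its finitely presented subgroups need not; and \emph{every} finitely presented subgroup of a right-angled Artin group is carried into a braid group by Corollary~\ref{c:braid}. Since the two decision problems in question are invariants of the isomorphism type of a group (respectively, of the pair consisting of a group and a distinguished subgroup), all of the braid-theoretic content is supplied in advance by Theorem~\ref{t:main}, and what remains is to assemble the right subgroups of products of free groups.

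First I would record the two inputs. By a theorem of Mihailova, the direct product $F_2\times F_2$ contains a finitely generated subgroup $K_0$ whose membership problem is unsolvable; here $K_0$ is the fiber product associated to a surjection of $F_2$ onto a finitely presented group with unsolvable word problem, and its membership problem encodes that word problem. Second, it is classical, going back to C.~F.~Miller~III and sharpened by Bridson, Howie, Miller and Short in their study of subgroups of direct products of free groups, that there is a finitely presented group $G_0$ which embeds in a direct product $F^{(1)}\times\cdots\times F^{(r)}$ of finitely many free groups and whose conjugacy problem is unsolvable. Note that $G_0$, being a finitely presented residually finite group (it is a subgroup of a residually free group), has solvable word problem; the same holds for $F_2\times F_2$.

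Next I would assemble a single group. Set
\[
\hat G \;=\; G_0\times (F_2\times F_2),\qquad K=\{1\}\times K_0\le \hat G .
\]
Then $\hat G$ is finitely presented and is a subgroup of a direct product of finitely many free groups, hence of a right-angled Artin group $G(\Gamma)$ for a suitable finite graph $\Gamma$. Because conjugacy in a direct product is decided coordinatewise and free groups have solvable conjugacy problem, the conjugacy problem of $\hat G$ reduces to that of the factor $G_0$, and is therefore unsolvable, even though the word problem of $\hat G$ is solvable. Likewise, viewing a word $w$ in the generators of the factor $F_2\times F_2$ as a word in the generators of $\hat G$ representing $(1,w)$ gives a reduction of the membership problem of $K_0$ in $F_2\times F_2$ to that of $K$ in $\hat G$; hence the latter is unsolvable as well.

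Finally I would apply Corollary~\ref{c:braid}(1) to obtain an injective group homomorphism $\iota\co G(\Gamma)\hookrightarrow P_n\le B_n$ for suitable $n$, and put $H=\iota(\hat G)$ and $K'=\iota(K)$. Since $\iota$ is an isomorphism onto its image, $H$ is a finitely presented subgroup of $B_n$ with $H\cong\hat G$, so the conjugacy problem for $H$ is unsolvable; and the pair $(H,K')$ is isomorphic to $(\hat G,K)$, so the membership problem for $K'<H$ is unsolvable. (If one instead wishes the membership problem of $H$ inside $B_n$, the same conclusion follows from the effective equivalence $w\in H\Leftrightarrow \iota(w)\in\iota(H)$, using that $\iota$ is computable on words.) I expect the only genuine obstacle to lie in the first step: one must invoke the fact that unsolvable conjugacy and membership phenomena already occur inside \emph{products of free groups}, since the ambient right-angled Artin groups are themselves algorithmically tame. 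Everything after that is formal, precisely because the relevant decision problems are isomorphism invariants preserved by the embedding of Corollary~\ref{c:braid}.
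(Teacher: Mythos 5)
Your transfer arguments in the final paragraph (conjugacy is an isomorphism invariant; membership pulls back through a computable injective homomorphism) are fine, and they are exactly the transfers the paper's proof uses implicitly. The fatal defect is that both of your group-theoretic inputs are false. Miller's subgroup of $F_2\times F_2$ with unsolvable conjugacy problem is finitely generated but \emph{not} finitely presented, and no finitely presented substitute exists: by Baumslag--Roseblade, every finitely presented subgroup of $F_2\times F_2$ is free or virtually a direct product of free groups, and the Bridson--Howie--Miller--Short work you invoke proves precisely the opposite of what you attribute to it, namely that the conjugacy and membership problems are \emph{solvable} for all finitely presented subgroups of direct products of free groups (more generally, for finitely presented residually free groups). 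For the same reason Mihailova's subgroup $K_0$ is not finitely presented: the kernel of $F_2\to Q$ is infinitely generated, so the $1$-$2$-$3$ theorem does not apply, and a finitely presented subgroup of $F_2\times F_2$ with unsolvable membership problem cannot exist. Consequently your $\hat G$ cannot be assembled, and even setting the conjugacy half aside, your membership conclusion concerns the pair $K'<H$ with $K'$ merely finitely generated, whereas the Corollary (with the paper's definition of the membership problem) asserts unsolvability of membership for the finitely presented subgroup $H$ itself inside $B_n$. Your parenthetical attempt to upgrade to that statement is unsupported: you never show that membership of $\hat G$ in $G(\Gamma)$ is unsolvable, and by the BHMS theorem just quoted it would in fact be solvable.

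This obstruction is exactly why one cannot work inside products of free groups, and why the paper instead simply cites Bridson (Theorem~\ref{t:bridson}): there exist a right-angled Artin group $A_2$ and a finitely presented subgroup $H<A_2$ whose conjugacy and membership problems are both unsolvable. Bridson evades the residually-free barrier by using the Rips construction in its Haglund--Wise ``special'' form: a short exact sequence $1\to N\to\Gamma\to Q\to 1$ with $N$ finitely generated, $\Gamma$ special (so it embeds in a right-angled Artin group but is far from residually free), and $Q$ of type $F_3$ with unsolvable word problem; the $1$-$2$-$3$ theorem then makes the fiber product $P<\Gamma\times\Gamma$ finitely presented while still encoding the word problem of $Q$. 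Given such an $H<A_2$, Theorem~\ref{t:main} (equivalently Corollary~\ref{c:braid}(1)) embeds $A_2$ into $B_n$ for all sufficiently large $n$, and your transfer arguments then finish the proof. The correct repair of your proposal is therefore to delete the construction of $\hat G$ and $K$ and to replace them by Bridson's $A_2$ and $H$, keeping only your last paragraph.
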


The word problem is solvable among all finitely presented subgroups of braid groups by virtue of their residual finiteness.

\subsection{Notes and references}
In \cite{Kapovich2012}, M. Kapovich shows that every right-angled Artin group ${G(\Gamma)}$ embeds in the group of hamiltonian symplectomorphisms of the 2--sphere.   In the second part of his proof, Kapovich resolves technical difficulties arising from right-angled Artin groups on non--planar graphs.
As trees are planar, Theorem~\ref{t:main} simplifies Kapovich's argument. 
We will also strengthen Kapovich's theorem to Corollary~\ref{c:braid} part (3).

Right-angled Artin subgroups of right-angled Artin groups and of mapping class groups have been studied by 
various authors from several points
 of view.  The reader may consult \cite{Kim2008}, \cite{CLM2012}, \cite{CP2001}, \cite{CW2007}, \cite{KK2013}, \cite{KK2014b}, \cite{KK2013b} and \cite{Koberda2012}, for instance.

Mapping class groups have proven to be a fruitful setting for the study of right-angled Artin groups for their own sake.  The setup which most easily lends itself to analysis is when one has a homomorphism \[\phi:{G(\Gamma)}\to\Mod(S)\] such that $\phi$ maps each vertex generator of ${G(\Gamma)}$ to a mapping class with a connected support~\cite{CLM2012,Koberda2012}.
In the absence of the connectedness of the support of a mapping class, various algebraic pathologies may arise (see \cite{Koberda2012}, for instance).

In the right-angled Artin group setting, the analogue of a mapping class with disconnected support is an element of a right-angled Artin group that is not a \emph{pure factor}, an element $1\neq g\in{G(\Gamma)}$ which can be written as a product $g=g_1\cdot g_2$ of nontrivial elements with no common powers such that $[g_1,g_2]=1$.
If \[\phi:G(\Lambda)\to{G(\Gamma)}\] is a homomorphism of right-angled Artin groups where vertex generators of $G(\Lambda)$ are sent to elements of ${G(\Gamma)}$ which are not pure factors, the same algebraic pathologies which occur in the mapping class group setting can again occur.

Oftentimes, various fortuitous circumstances allow one to circumvent the algebraic pathologies which occur when studying homomorphisms from a right-angled Artin group to another right-angled Artin group, or to a mapping class group.  Such analyses have been carried out in \cite{CDK2013} and \cite{KK2013c} in the right-angled Artin group and mapping class group cases respectively, for instance.

This article is entirely concerned with injective maps ${G(\Gamma)}\to G(T)$ where vertex generators of ${G(\Gamma)}$ are sent to elements of $G(T)$ which are never pure factors, and with compositions of such maps with injective homomorphisms $G(T)\to P_n$, so that the vertex generators of ${G(\Gamma)}$ are always sent to mapping classes with disconnected support.  

Let us comment on the precursors of Theorem~\ref{t:main}. 
Crisp and Wiest proved that ${G(\Gamma)}$ embeds into $G(\Lambda)$ if $\Lambda$ is a finite cover of $\Gamma$~\cite[Proposition 19]{CW2004},
and M. Kapovich extended this result to the case when $\Lambda$ is 
a finite \emph{orbi-cover} of $\Gamma$~\cite[Lemma 2.3]{Kapovich2012}.
The methods of this paper was also inspired by the analysis in \cite{CDK2013} of an injective map $G(C_5)\to G(L_8)$, where $C_5$ denotes the cycle on five vertices and where $L_8$ denotes a path on eight vertices. 

\section{Acknowledgements}
The authors thank M.~Brandenbursky, C.~Cho, S.~Lee and Y.~Minsky for helpful conversations. 
The second named author was supported by National Research Foundation of Korean Government (NRF-2013R1A1A1058646) and Samsung Science and Technology Foundation (SSTF-BA1301-06).
The second named author is partially supported by NSF grant DMS-1203964.  The authors thank the referee for several helpful comments.

\section{Preliminaries}
Every graph in this paper will be assumed to be simplicial. 
For two graphs $X$ and $Y$, we write $X\le Y$ if a graph $X$ is an \emph{induced} subgraph of another graph $Y$. This means, $X$ is a subgraph of $Y$ such that \[E(X) = \binom{V(X)}{2}\cap E(Y).\] 
If $S$ is a subset of $V(Y)$, we often identify $S$ with the induced subgraph of $Y$ on $S$. 
For a vertex $x$ in a graph $X$, the \emph{link} and the \emph{star} of $v$ are defined respectively as:
\[
\lk(v) = \{u\in V(X)\co \{u,v\}\in E(X)\}, \st(v) = \lk(v)\cup \{v\}.\]

Let $\Gamma$ be a finite graph.
Each element in $V(\Gamma)\cup V(\Gamma)^{-1}$ is called a \emph{letter} in ${G(\Gamma)}$.
A \emph{word} in ${G(\Gamma)}$ is a finite sequence of letters, and usually written as a multiplication of letters.
The \emph{word length} of an element $g\in {G(\Gamma)}$ is the length of a shortest word representing $g$
and denoted as $\|g\|$.
A word $w$ is \emph{reduced} if its length realizes the word length.
The \emph{support} of $g\in {G(\Gamma)}$ is the set of the vertices $v$ such that $v$ or $v^{-1}$ appears in a reduced word for $g$.
The support of $g$ is denoted as $\supp(g)$.
For example, $\supp(a^{-1}bb^{-1})=\{a\}$ if $a$ and $b$ are vertices.

Let $w$ be a (possibly non-reduced) word in ${G(\Gamma)}$ and $v$ be a vertex of $\Gamma$.
A \emph{cancellation of $v$ in $w$} is a subword \[v^{\pm1}w' v^{\mp1}\] of $w$
such that $\supp(w')\cap \lk(v)=\varnothing$. If, furthermore, no letters in $w'$ are equal to $v$ or $v^{-1}$ then we say 
the word \[v^{\pm1}w' v^{\mp1}\] is an \emph{innermost cancellation of $v$ in $w$}.
If $v$ or $v^{-1}$ is a letter in $w$ and $v\not\in\supp w$, then there is a cancellation of $v$ in $w$ and hence, there is an innermost one; this follows from a well-known solution to the word problem in right-angled Artin groups~\cite{Wrathall1988,Charney2007}.
Hence every non-reduced word contains an innermost cancellation.
If there are no cancellations of a vertex $v$ in $w$,
then $v$ ``survives'' in each reduced word $w'$ representing $w$,
in the sense that the number of occurrences of $v$ or $v^{-1}$ in $w$ is the same as 
that in $w'$.

\section{Proof of Theorem~\ref{t:main}}
For the purposes of Theorem \ref{t:main}, we may assume that $\Gamma$ is connected.  For otherwise we can write $\Gamma=\Gamma_1\coprod\Gamma_2$ so that ${G(\Gamma)}=G(\Gamma_1)\times G(\Gamma_2)$.  
If there are trees $T_1,T_2$ and a quasi-isometric group embedding from $G(\Gamma_i)$ into $G(T_i)$ for each $i=1,2$,
then we simply let $T$ be the tree obtained by joining a vertex in $T_1$ to another vertex in $T_2$ by a length--two path.
Since there is a natural isometric embedding from $G(T_1)\times G(T_2)$ into $G(T)$, we have a quasi-isometric group embedding from ${G(\Gamma)}$ into $G(T)$.

From now on, we will let $\Gamma$ be a finite, connected graph and $p\co \yt \Gamma\to \Gamma$ be its universal cover.
We will fix an arbitrary order on $V(\yt\Gamma)$.
Every induced subgraph of $\yt\Gamma$ is a forest.
Let $T$ be a finite induced subgraph of $\yt{\Gamma}$. Then $T$ induces a group homomorphism $\phi(\Gamma,T):{G(\Gamma)}\to G(T)$ defined by \[\phi(\Gamma,T):v\mapsto\prod_{t\in p^{-1}(v)\cap T}t,\] where 
we define this product to be the identity if the indexing set is empty.  
Since no two vertices in $p^{-1}(v)$ are adjacent, the above product is well-defined.

Suppose $w$ is a word in ${G(\Gamma)}$ written as $x_1^{e_1} x_2^{e_2} \cdots x_\ell^{e_\ell}$ for $x_1,x_2,\ldots,x_\ell\in V(\Gamma)$
and $e_1,e_2,\ldots,e_\ell=\pm1$.
Let us write the product \[\phi(\Gamma,T)(x_i) = \prod_{t\in p^{-1}(x_i)\cap T}t\] in the increasing order.
We define
the \emph{$\phi(\Gamma,T)$--homomorphic word of $w$} as the word
\[\prod_{i=1}^\ell (\phi(\Gamma,T)(x_i))^{e_i}.\]
Note that the above (possibly non-reduced) word represents $\phi(\Gamma,T)(w)$ in $G(T)$.

\begin{defn}\label{d:surviving}
Let $T$ be a finite induced subgraph of $\yt\Gamma$ and $F\subseteq V(T)$.
We say $\phi(\Gamma,T)$ is \emph{$F$-surviving} if for every reduced word $w$ in ${G(\Gamma)}$ and $v\in F$,
the $\phi(\Gamma,T)$--homomorphic word of $w$ does not have a cancellation of $v$.
\end{defn}

We make some immediate observations:
\begin{lem}\label{l:surv}
Let $T'$ be a finite induced forest in $\yt\Gamma$ and $T\le T'$.
\begin{enumerate}
\item
For each $w\in {G(\Gamma)}$, we have $\supp(\phi(\Gamma,T)(w))\subseteq\supp(\phi(\Gamma,T')(w))$.
In particular, $\ker \phi(\Gamma,T')\le \ker \phi(\Gamma,T)$.
\item
Suppose $F'\subseteq F\subseteq V(T)$.
If $\phi(\Gamma,T)$ is $F$-surviving, then $\phi(\Gamma,T')$ is $F'$-surviving.
\item
For $F_1,F_2\subseteq V(T)$,
if $\phi(\Gamma,T)$ is $F_1$-surviving and $F_2$-surviving, then 
$\phi(\Gamma,T)$ is $(F_1\cup F_2)$-surviving.
\end{enumerate}
\end{lem}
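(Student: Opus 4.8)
The plan is to handle the three parts in order, in each case analyzing how the passage from $T'$ down to $T$ acts on words in the respective right-angled Artin groups; the substance is concentrated in part (2).

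For part (1), I would first record that, because $T\le T'$ is \emph{induced}, deleting every generator outside $V(T)$ defines a retraction $\rho\co G(T')\to G(T)$: each defining relator of $G(T')$ either survives in $G(T)$ (this is where inducedness enters) or maps to a trivial relation. Comparing the defining products factor by factor then shows $\rho\circ\phi(\Gamma,T')=\phi(\Gamma,T)$, since deleting the letters $t\notin V(T)$ from the increasing product $\prod_{t\in p^{-1}(v)\cap T'}t$ leaves precisely $\prod_{t\in p^{-1}(v)\cap T}t$, and deletion commutes with inversion. The support inclusion now follows from the elementary fact that $\supp(h)$ is contained in the letter set of \emph{any} word representing $h$: applying $\rho$ to a reduced word for $\phi(\Gamma,T')(w)$ produces a word for $\phi(\Gamma,T)(w)$ all of whose letters lie in $\supp(\phi(\Gamma,T')(w))\cap V(T)$, whence $\supp(\phi(\Gamma,T)(w))\subseteq\supp(\phi(\Gamma,T')(w))$. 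Specializing to $w\in\ker\phi(\Gamma,T')$ forces the right-hand support to be empty, so $\phi(\Gamma,T)(w)=1$ and $\ker\phi(\Gamma,T')\le\ker\phi(\Gamma,T)$.

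For part (2), the key observation is that the $\phi(\Gamma,T)$-homomorphic word $W$ of $w$ is obtained from the $\phi(\Gamma,T')$-homomorphic word $W'$ of $w$ by deleting every letter $t^{\pm1}$ with $t\notin V(T)$; this is the word-level manifestation of $\rho\circ\phi(\Gamma,T')=\phi(\Gamma,T)$ and is checked factor by factor. Arguing contrapositively, suppose $\phi(\Gamma,T')$ is not $F'$-surviving, so for some reduced $w$ and some $v\in F'\subseteq V(T)$ the word $W'$ contains a cancellation $v^{\pm1}u\,v^{\mp1}$ with $\supp(u)\cap\lk_{T'}(v)=\varnothing$. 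Deleting the letters outside $V(T)$ carries this onto a consecutive subword $v^{\pm1}\bar u\,v^{\mp1}$ of $W$ (the flanking $v$'s survive as $v\in V(T)$), with $\supp(\bar u)\subseteq\supp(u)$. Since $T$ is induced in $T'$ we have $\lk_T(v)=\lk_{T'}(v)\cap V(T)\subseteq\lk_{T'}(v)$, so $\supp(\bar u)\cap\lk_T(v)=\varnothing$ and $v^{\pm1}\bar u\,v^{\mp1}$ is a cancellation of $v$ in $W$. As $v\in F'\subseteq F$, this contradicts the hypothesis that $\phi(\Gamma,T)$ is $F$-surviving.

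Part (3) is immediate from the definition: given a reduced word $w$ and $v\in F_1\cup F_2$, the vertex $v$ lies in $F_1$ or in $F_2$, and the corresponding surviving hypothesis already forbids a cancellation of $v$ in the $\phi(\Gamma,T)$-homomorphic word of $w$. I expect the only genuinely delicate point to be the bookkeeping in part (2): one must verify both that the deletion map transports a cancellation to a cancellation by preserving the consecutive-subword structure, and that the link condition $\supp(\bar u)\cap\lk_T(v)=\varnothing$ persists after deletion. It is precisely here that the inducedness of $T\le T'$, through the inclusion $\lk_T(v)\subseteq\lk_{T'}(v)$, is indispensable.
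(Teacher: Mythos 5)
Your proposal is correct and follows essentially the same route as the paper: the paper's proof also introduces the quotient map $\eta\co G(T')\to G(T)$ killing the vertices of $T'\setminus T$ (your $\rho$), uses $\phi(\Gamma,T)=\eta\circ\phi(\Gamma,T')$ together with $\supp(\eta(u))\subseteq\supp(u)$ to get (1), and then declares (2) and (3) clear from the definitions, so your letter-deletion argument for (2) is precisely the verification the paper omits. The only quibble is a misplaced parenthetical: under this paper's (opposite) convention the quotient homomorphism $\rho$ exists for any subgraph $T\subseteq T'$, and inducedness is instead what provides the section $G(T)\to G(T')$ making $\rho$ a retraction --- but since only the quotient direction is used, nothing in your argument is affected.
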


\begin{proof}
There is a natural quotient map $\eta\co G(T')\to G(T)$ sending the vertices in $T'\setminus T$ to the identity.
Since $\supp\eta(w)\subseteq\supp(w)$ for $w\in G(T')$ and $\phi(\Gamma,T) = \eta\circ\phi(\Gamma,T')$,
we obtain (1). The parts (2) and (3) are clear from definitions.
\end{proof}

\begin{lem}\label{l:surv-qi}
Let $T$ be a finite subtree of $\yt\Gamma$ and $F\subseteq V(T)$ such that $p(F) = V(\Gamma)$.
If $\phi(\Gamma,T)$ is $F$-surviving then $\phi(\Gamma,T)$ is a quasi-isometric group embedding.
\end{lem}

\begin{proof}
Let $w$ be a reduced word in ${G(\Gamma)}$.
Note that $\| \phi(\Gamma,T)(w)\| \le | V(T) | \| w \|$. 
On the other hand, the $F$-surviving condition implies that some reduced word for $\phi(\Gamma,T)(w)$ contains the $\phi(\Gamma,F)$--homomorphic word of $w$ as a subsequence. This gives $\|\phi(\Gamma,T)(w)\| \ge \|w\|$ and in particular, $\phi(\Gamma,T)$ is injective.
\end{proof}

If $v$ is a vertex of $\Gamma$, we denote by $\Gamma\setminus v$ the induced subgraph of $\Gamma$ on $V(\Gamma)\setminus\{v\}$.
The following lemma is a key inductive step in the proof of Theorem~\ref{t:tree}.

\begin{lem}\label{l:key}
Let $v$ be a vertex of $\Gamma$
and $T_0$ be a finite tree in $\yt\Gamma$.
If the restriction of $\phi(\Gamma,T_0)$ to $G(\Gamma\setminus v)$ is injective,
then for each vertex $v'\in p^{-1}(v)$ there exists a finite tree $T$ in $\yt\Gamma$ containing $T_0\cup\{v'\}$
such that $\phi(\Gamma,T)$ is $v'$-surviving
and injective.
\end{lem}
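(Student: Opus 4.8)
The plan is to build $T$ in stages and to deduce injectivity from the surviving property, so that the real content is the construction of a $v'$--surviving map. First I would observe that it suffices to produce a finite subtree $T$ of $\yt\Gamma$ with $T_0\cup\{v'\}\subseteq T$, containing the entire star of $v'$ in $\yt\Gamma$, and with $\phi(\Gamma,T)$ being $v'$--surviving. Indeed, since $T_0\le T$, Lemma~\ref{l:surv}(1) gives $\ker\phi(\Gamma,T)\le\ker\phi(\Gamma,T_0)$, so the hypothesis forces $\phi(\Gamma,T)$ to remain injective on $G(\Gamma\setminus v)$. For a reduced $w\neq1$ with $v\notin\supp(w)$ this already yields $\phi(\Gamma,T)(w)\neq1$; and if $v\in\supp(w)$, then $v'$ occurs in the $\phi(\Gamma,T)$--homomorphic word of $w$, so the $v'$--surviving property keeps $v'$ in a reduced form of $\phi(\Gamma,T)(w)$, again giving $\phi(\Gamma,T)(w)\neq1$. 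Thus injectivity comes for free once $v'$ survives.

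Second, I would reduce the $v'$--surviving property to a statement about the neighbors of $v'$. Given a reduced word $w$, any cancellation of $v'$ in its homomorphic word yields an innermost one, flanked by two consecutive occurrences of $v$ in $w$ of opposite sign; let $w_0$ be the intervening subword, which contains no $v$, so that $w_0\in G(\Gamma\setminus v)$. Since $w$ is reduced there is no cancellation of $v$ in $w$, whence $\supp(w_0)\cap\lk(v)\neq\varnothing$. Applying the retraction $G(T)\to G(T\setminus p^{-1}(v))$ that kills every lift of $v$, and using that every lift of $v$ other than $v'$ commutes with $v'$, I can identify the innermost cancellation condition with $\supp(\phi(\Gamma,T)(w_0))\cap\lk(v')=\varnothing$. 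Hence it is enough to guarantee that for every reduced $w_0\in G(\Gamma\setminus v)$ with $\supp(w_0)\cap\lk(v)\neq\varnothing$ the support of $\phi(\Gamma,T)(w_0)$ meets $\lk(v')$; this in turn follows if $\phi(\Gamma,T)$ restricted to $G(\Gamma\setminus v)$ survives each neighbor of $v'$, since the lift $u'$ of a vertex $u\in\supp(w_0)\cap\lk(v)$ adjacent to $v'$ then persists in $\phi(\Gamma,T)(w_0)$ and lies in $\lk(v')$.

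Third, the heart of the matter is to arrange this neighbor--surviving by an appropriate choice of $T$. Here I would use that $\yt\Gamma\setminus p^{-1}(v)$ is a disjoint union of the universal covers of the connected components of $\Gamma\setminus v$, that the finitely many neighbors of $v'$ lie in pairwise distinct such components (any path in the tree $\yt\Gamma$ joining two of them runs through $v'$), and that the restriction of $\phi(\Gamma,\cdot)$ to $G(\Gamma\setminus v)$ splits along these components into the homomorphisms attached to the smaller graphs. For each neighbor $u'$ of $v'$, lying in a component covering a component $D$ of $\Gamma\setminus v$ with $|V(D)|<|V(\Gamma)|$, the inductive hypothesis produces a finite subtree of that component making the corresponding map $u'$--surviving. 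Assembling these subtrees together with $v'$, the star of $v'$, and $T_0$, and filling in the connecting geodesics inside the tree $\yt\Gamma$, yields the desired finite tree $T$; Lemma~\ref{l:surv}(2),(3) ensures that enlarging in this way preserves and combines the surviving of each neighbor.

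The main obstacle is precisely this third step. Merely adjoining the star of $v'$ does not suffice: a neighbor of $v'$ can be cancelled inside $\phi(\Gamma,T)(w_0)$ — already visible when $\Gamma$ is a $4$--cycle and $w_0=bcb^{-1}c^{-1}$ — and patching this by iteratively adjoining neighbors of neighbors would require unboundedly deep pieces of $\yt\Gamma$, whereas a single finite $T$ must work for all words $w$ simultaneously. The resolution is the covering--space decomposition of $\yt\Gamma\setminus p^{-1}(v)$, which recasts neighbor--surviving as surviving problems on the strictly smaller graphs $\Gamma\setminus v$ and so feeds an induction on $|V(\Gamma)|$. The remaining work is bookkeeping: checking that the hypotheses needed to invoke the inductive step on each component $D$ are inherited from the injectivity of $\phi(\Gamma,T_0)$ on $G(\Gamma\setminus v)$, and that the pieces glue to a genuine finite subtree of $\yt\Gamma$ containing $T_0\cup\{v'\}$.
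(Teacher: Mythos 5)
Your first two steps are correct, and they essentially coincide with the first half of the paper's own argument: injectivity is deduced from the $v'$-surviving property via Lemma~\ref{l:surv}(1), and an innermost cancellation of $v'$ is converted into a reduced word $w_0\in G(\Gamma\setminus v)$ with $\supp(w_0)\cap\lk_\Gamma(v)\neq\varnothing$ but $\supp(\phi(\Gamma,T)(w_0))\cap\lk_{\yt\Gamma}(v')=\varnothing$, which is then blocked by making the unique lift $u'\in\lk_{\yt\Gamma}(v')$ of some $u\in\supp(w_0)\cap\lk_\Gamma(v)$ survive. The gap is in your third step, exactly where you place ``the heart of the matter.'' To invoke Lemma~\ref{l:key} inductively for a component $D$ of $\Gamma\setminus v$ and the lift $u'$, you must first exhibit a finite tree $S_0$ inside the \emph{particular} component $C$ of $p^{-1}(\Gamma\setminus v)$ containing $u'$ (your copy of $\yt{D}$) such that $\phi(D,S_0)$ restricted to $G(D\setminus u)$ is injective. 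You claim this is ``inherited from the injectivity of $\phi(\Gamma,T_0)$ on $G(\Gamma\setminus v)$'' and call it bookkeeping, but it is not: the restriction of $\phi(\Gamma,T_0)$ to $G(D)$ is a \emph{diagonal} map $g\mapsto \left(\phi(D,T_0\cap C')(g)\right)_{C'}$ over all components $C'$ of $p^{-1}(\Gamma\setminus v)$ covering $D$ that meet $T_0$. Injectivity of a diagonal does not pass to any single factor, and the one component you actually need, namely $C$, may meet $T_0$ in very little or not at all (nothing ties $v'$, hence $u'$, to the location of $T_0$ in $\yt\Gamma$). As written, the hypothesis of your inductive step is unavailable and the induction cannot start.

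The gap can be filled, but only by a genuine argument---and, tellingly, by the same mechanism the paper uses: choose deck transformations $\sigma_{C'}$ carrying each relevant component $C'$ onto $C$, let $S_0$ be a finite tree in $C$ containing $\bigcup_{C'}\sigma_{C'}(T_0\cap C')$, and use equivariance together with Lemma~\ref{l:surv}(1) to get $\ker\phi(D,S_0)\le\bigcap_{C'}\ker\phi(D,T_0\cap C')$, whose restriction to $G(D\setminus u)$ is trivial by hypothesis. (Alternatively one can restructure your argument as a joint induction with Theorem~\ref{t:tree}, building $S_0$ from scratch out of the components of $D\setminus u$; your induction on Lemma~\ref{l:key} alone does not provide this.) For contrast, the paper proves Lemma~\ref{l:key} with no induction at all: it takes $T_1\supseteq T_0\cup\st(v')$, saturates $T=\bigcup_{\sigma\in\Sigma}\sigma(T_1)$ over the (finitely many) deck transformations $\sigma$ with $\sigma(T_1)\cap T_1\neq\varnothing$, and kills a hypothetical cancellation at $x'$ directly: injectivity of $\phi(\Gamma,T_1)$ on $G(\Gamma\setminus v)$ produces a witness $y\in\supp(\phi(\Gamma,T_1)(w_2))\cap\lk_{\yt\Gamma}(x'')$ at some \emph{other} lift $x''$ of $x$ inside $T_1$, and the deck transformation in $\Sigma$ taking $x''$ to $x'$ transports that witness into $\lk_{\yt\Gamma}(x')$, contradicting the cancellation. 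Your remaining assembly steps (surviving is preserved under enlarging the forest and under passing from $G(D)$ to $G(\Gamma\setminus v)$, and the pieces glue to a finite tree) are sound, but until the hypothesis-transfer above is supplied, the proposal is incomplete at its central point.
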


\begin{proof}
Choose a finite tree $T_1\le \yt\Gamma$ containing $T_0\cup\st(v')$.
Let $\Sigma$ be the set of deck transformations $\sigma\co \yt\Gamma\to \yt\Gamma$ such that 
$\sigma(T_1)\cap T_1\ne\varnothing$. This set $\Sigma$ is finite since the deck transformation group of $\yt{\Gamma}$ over $\Gamma$ acts freely and simplicially.
Define \[T = \bigcup_{\sigma\in\Sigma} \sigma(T_1).\]

We first claim that $\phi(\Gamma,T)$ is $v'$-surviving. 
For this, let us suppose $w$ is a nontrivial reduced word in ${G(\Gamma)}$ such that the $\phi(\Gamma,T)$--homomorphic word of $w$ has an innermost cancellation of $v'$. We then find a subword \[v^{\pm1} w_1 v^{\mp1}\] of $w$ such that $v\not\in \supp w_1$
and \[\supp (\phi(\Gamma,T)(w_1)) \cap \lk_{\yt\Gamma}(v') = \varnothing.\]
Since $v^{\pm1} w_1 v^{\mp1}$ is reduced, there exists a vertex $x\in \supp w_1 \cap \lk_\Gamma(v)$.
Then \[p^{-1}(x)\cap \lk_{\yt\Gamma}(v') = \{x'\} \subseteq \lk_{\yt\Gamma}(v')\subseteq T_1\] for some vertex $x'$.
Since $x'\not\in\supp \phi(\Gamma,T)(w_1)$,
there exists an innermost cancellation of $x'$ in the $\phi(\Gamma,T)$--homomorphic word of $w_1$.
This implies that $w_1$ has a subword \[x^{\pm1} w_2 x^{\mp1}\] such that
\begin{equation*}
\supp (\phi(\Gamma,T)(w_2)) \cap \lk_{\yt\Gamma}(x')=\varnothing.
\tag{*}
\end{equation*}
The restriction of $\phi(\Gamma,T_1)$ to $G(\Gamma\setminus v)$ is injective by the assumption and Lemma~\ref{l:surv} (1). 
In particular, \[\phi(\Gamma,T_1)(x^{\pm1}w_2 x^{\mp1}) \ne \phi(\Gamma,T_1)(w_2).\]
So we can find vertices 
\[ x''\in p^{-1}(x)\cap T_1, \quad y\in \supp(\phi(\Gamma,T_1)(w_2))\cap \lk_{\yt\Gamma}(x'')\subseteq T_1.\]
Since $x',x''\in p^{-1}(x)\cap T_1$ there is a deck transformation $\sigma\in \Sigma$ such that $\sigma(x'')=x'$.
Note that
\[\sigma(y) \in \supp(\phi(\Gamma,\sigma(T_1))(w_2))\cap \lk_{\yt\Gamma}(x')
\subseteq
\supp(\phi(\Gamma,T)(w_2))\cap \lk_{\yt\Gamma}(x').\]
This contradicts (*).

It remains to show that $\phi(\Gamma,T)$ is injective. Suppose $w$ is a reduced word in $\ker\phi(\Gamma,T)\setminus 1$.
By the assumption, $v\in \supp w$ and hence,
the $\phi(\Gamma,T)$--homomorphic word of $w$ contains an innermost cancellation of $v'$.
This is a contradiction, for $\phi(\Gamma,T)$ is $v'$-surviving.
\end{proof}

Theorem~\ref{t:main} is an immediate consequence of the following theorem.

\begin{thm}\label{t:tree}
Let $\Gamma$ be a finite connected graph 
and $p\co \yt\Gamma\to \Gamma$ be a universal cover.
Then there exists a finite tree $T$ in $\yt\Gamma$ such that $\phi(\Gamma,T)$ is a quasi-isometric group embedding.
\end{thm}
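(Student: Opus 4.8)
The plan is to split the statement into two tasks: first, produce a finite tree $T^\ast\le\widetilde\Gamma$ on which $\phi(\Gamma,T^\ast)$ is merely \emph{injective}; and second, upgrade such a tree to one on which $\phi$ survives at a lift of every vertex, at which point Lemma~\ref{l:surv-qi} delivers the quasi-isometric embedding. The second task is the routine one. Given $T^\ast$ with $\phi(\Gamma,T^\ast)$ injective, I would enumerate $V(\Gamma)=\{v_1,\dots,v_m\}$ and apply Lemma~\ref{l:key} successively. At each stage the current map is injective on all of ${G(\Gamma)}$, hence in particular on $G(\Gamma\setminus v_i)$, so the hypothesis of Lemma~\ref{l:key} is met; its output is again injective and is $v_i'$-surviving for a chosen lift $v_i'\in p^{-1}(v_i)$. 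Since the trees are nested, Lemma~\ref{l:surv}(2) preserves the surviving property gained at earlier stages and Lemma~\ref{l:surv}(3) accumulates them, so after $m$ steps the resulting tree $T$ satisfies that $\phi(\Gamma,T)$ is $F$-surviving for $F=\{v_1',\dots,v_m'\}$ with $p(F)=V(\Gamma)$. Lemma~\ref{l:surv-qi} then finishes.

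The real content is the first task, and here the difficulty is a genuine circularity: Lemma~\ref{l:key} requires injectivity on the whole of $G(\Gamma\setminus v)$ before it will deliver even a single surviving lift, yet injectivity is exactly what we are trying to establish. I would break this circularity by an induction on $|V(\Gamma)|$, proving that for every finite connected graph $\Gamma$ there is a finite tree $T\le\widetilde\Gamma$ with $\phi(\Gamma,T)$ injective. The base case $|V(\Gamma)|=1$ is trivial. For the inductive step I would choose $v$ to be a \emph{non-cut} vertex of $\Gamma$ (a leaf of a spanning tree always works), so that $\Gamma':=\Gamma\setminus v$ is again connected and the inductive hypothesis applies to it.

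The one point to handle carefully is that the induction is on the smaller graph $\Gamma'$ while all the maps live over the fixed universal cover $\widetilde\Gamma$. The induced subgraph of $\widetilde\Gamma$ on $p^{-1}(V(\Gamma'))$ is a forest on which $p$ restricts to a covering of $\Gamma'$; since $\Gamma'$ is connected, each component is a tree covering $\Gamma'$ and hence, being simply connected, is a copy of the universal cover $\widetilde{\Gamma'}$. Fixing one such component $\widehat{\Gamma'}$ and applying the inductive hypothesis inside it, I obtain a finite tree $T'\le\widehat{\Gamma'}$ with $\phi(\Gamma',T')$ injective. Because $T'$ contains no lift of $v$, one checks directly from the definitions that $\phi(\Gamma,T')$ sends $v$ to the identity and agrees with $\phi(\Gamma',T')$ on the vertex generators of the subgroup $G(\Gamma')\le{G(\Gamma)}$; hence the restriction of $\phi(\Gamma,T')$ to $G(\Gamma\setminus v)$ is injective, and enlarging $T'$ to any finite tree preserves this by Lemma~\ref{l:surv}(1).

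With injectivity of the restriction in hand, a single application of Lemma~\ref{l:key} (with this enlarged tree as $T_0$ and any lift $v'$ of $v$) produces a finite tree $T$ on which $\phi(\Gamma,T)$ is injective, completing the induction. I expect the main obstacle to lie precisely in the bookkeeping of that last technical paragraph: verifying that the abstractly-defined universal cover of $\Gamma\setminus v$ may be realized as a component of the subforest of $\widetilde\Gamma$, and that the restriction of $\phi(\Gamma,\cdot)$ to $G(\Gamma\setminus v)$ genuinely coincides with the map $\phi(\Gamma\setminus v,\cdot)$ to which the inductive hypothesis speaks. Choosing $v$ non-cut is exactly what keeps $\Gamma\setminus v$ connected and lets the inductive hypothesis apply without having to treat direct-product decompositions of a disconnected subgraph.
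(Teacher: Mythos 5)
Your proposal is correct and follows essentially the same route as the paper's proof: induction on $|V(\Gamma)|$ after removing a non-cut vertex (a leaf of a spanning tree), identification of a component of $p^{-1}(\Gamma\setminus v)$ with the universal cover of $\Gamma\setminus v$, Lemma~\ref{l:key} as the engine, and Lemmas~\ref{l:surv} and~\ref{l:surv-qi} to pass from a surviving set $F$ with $p(F)=V(\Gamma)$ to a quasi-isometric embedding. The only difference is organizational: the paper's induction carries the full quasi-isometric-embedding statement (doing the surviving upgrade at every level), whereas you induct only on injectivity and do the upgrade once at the end --- a harmless streamlining, since the paper's inductive hypothesis is likewise used only through injectivity.
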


\begin{proof}
We fix an arbitrary order on $V(\yt\Gamma)$ so that a $\phi(\Gamma,\Lambda)$--homomorphic word is well-defined for each finite subgraph $\Lambda$ of $\yt\Gamma$.
Let $F_0$ be a maximal tree in $\Gamma$ and $F$ be a lift of $F_0$ in $\yt\Gamma$,
so that $p(V(F))=V(\Gamma)$.
Choose a valence-one vertex $v$ in $F_0$ and let $v'$ be the lift of $v$ in $F$.
Note that $\Gamma\setminus v$ is connected
and each component of $p^{-1}(\Gamma\setminus v)$ is a universal cover of $\Gamma\setminus v$.
Inducting on the number of vertices in $\Gamma$,
we may assume that for some finite tree $T_0$ contained in $p^{-1}(\Gamma\setminus v)\le \yt\Gamma$,
the map $\phi(\Gamma\setminus v, T_0)$ is a quasi-isometric group embedding.
By Lemma~\ref{l:key}, there is a finite tree $T_1$ containing $v'$ in $\yt\Gamma$ such that $\phi(\Gamma,T_1)$ is $v'$-surviving and injective.
Then for each $u\in V(\Gam)$, the restriction of $\phi(\Gamma,T_1)$ to $G(\Gamma\setminus u)$ is injective.
By a repeated application of Lemmas~\ref{l:surv} and~\ref{l:key}, we can enlarge $T_1$ to another tree $T\le \yt\Gamma$ such that $\phi(\Gamma,T)$ is $F$-surviving. Lemma~\ref{l:surv-qi} completes the proof.
\end{proof}

\begin{rem}\label{r:ranks}
Let $m = | V(\Gamma) |$.
In Lemma~\ref{l:key}, with an additional hypothesis that $d(v',T_0)\le 1$ we see that 
\[ |V(T_1)| \le | V(T_0)| + |\st(v')| \le |V(T_0)|+m.\] So 
$|V(T)| \le | \Sigma| |V(T_1)| \le |V(T_1)|^3 \le (|V(T_0)|+m)^3$.
Now let us consider the proof of Theorem~\ref{t:tree}. We can inductively choose $T_0$ which contains $F_0$,
and deduce that 
\[
|V(T)| \le 
((\cdots( | V(T_0)|+m)^3 + m)^3\cdots+m)^3\]
where the iteration is $m$ times.
An easy recursive argument shows that 
\[|V(T)|\le2^{2^{(m-1)^2}}.\]
\end{rem}

\section{Applications}\label{s:app}
\subsection{Pure braid groups}
Crisp and Wiest proved that if $\Gamma$ is a planar graph then ${G(\Gamma)}$ 
admits a quasi-isometric group embedding into
a pure braid group~\cite{CW2007}.
We give an alternative account of this fact, modulo a general result about faithful homomorphisms from right-angled Artin groups to mapping class groups and their geometric behavior; see \cite{CLM2012}.

\begin{lem}[See \cite{CW2007}]\label{l:planar}
If $\Gamma$ is a planar graph, then for some $n$ we have that ${G(\Gamma)}$ admits a quasi-isometric group embedding into $P_n$.
\end{lem}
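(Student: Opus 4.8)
The plan is to realize $G(\Gamma)$ for a planar $\Gamma$ inside a mapping class group by assigning to each vertex a simple closed curve (or a Dehn twist) on a surface, in such a way that two twists commute precisely when the corresponding curves are disjoint. This is the classical strategy: Dehn twists $T_a, T_b$ commute when $a,b$ are disjoint, and generate a free group of rank two when $a,b$ fill a subsurface (more precisely, have large geometric intersection). Since we are using the \emph{opposite} convention, where $[v_i,v_j]=1$ iff $\{v_i,v_j\}\notin E(\Gamma)$, I want a configuration of curves in which $v_i,v_j$ are \emph{disjoint} exactly when they are \emph{not} adjacent in $\Gamma$, i.e. adjacency corresponds to intersection.

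\emph{First} I would use planarity to embed $\Gamma$ in the plane, and thicken this embedding to build a surface $S$ together with a collection of curves $\{c_v : v\in V(\Gamma)\}$ whose intersection pattern matches $E(\Gamma)$. The cleanest route is the construction of Crisp--Wiest: an embedding of the graph in the plane lets one draw a curve for each vertex so that curves cross exactly at the edges of $\Gamma$, and one arranges that intersecting curves actually fill a subsurface rather than merely touch once. \emph{Then} I would invoke the general result on right-angled Artin subgroups of mapping class groups (the reference is \cite{CLM2012}) which guarantees that, once each vertex generator is sent to a power of a Dehn twist (or a partial pseudo-Anosov) supported on a connected subsurface, and the commutation graph of these mapping classes is exactly $\Gamma$, the induced homomorphism $G(\Gamma)\to\Mod(S)$ is injective and moreover a quasi-isometric group embedding. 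This is precisely the setting flagged in the introduction, where vertex generators map to mapping classes with \emph{connected} support.

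\emph{Next} I would pass from $\Mod(S)$ to a pure braid group. Since $\Gamma$ is planar, the surface $S$ can be taken to be planar as well, i.e. a disk (or sphere) with some punctures/boundary, so that $\Mod(S)$ embeds into a braid group $B_n$. To land in the \emph{pure} braid group $P_n$ rather than $B_n$, I would note that the relevant mapping classes fix each puncture, or else pass to a finite-index subgroup; since a quasi-isometric embedding composed with a finite-index inclusion (which is itself a quasi-isometric embedding) remains quasi-isometric, this causes no loss. Composing the two quasi-isometric group embeddings $G(\Gamma)\hookrightarrow\Mod(S)\hookrightarrow P_n$ yields the claim.

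\textbf{Main obstacle.} The hard part will be arranging the curve system so that adjacency translates to curves that genuinely \emph{fill} a common subsurface and whose supports are \emph{connected}, not merely curves meeting transversally once; a single transverse intersection gives two twists generating a free group, which is what the opposite convention demands for a non-commutation, but one must check this carefully and simultaneously guarantee disjointness for non-edges. Equally, invoking \cite{CLM2012} as a black box requires that the hypotheses there --- connected supports and a suitable ``large intersection'' or ``fellow-traveling in the curve complex'' condition --- be verified for the specific configuration produced from the planar embedding. Making the intersection numbers large enough (by taking sufficiently high twisting powers) so that the quantitative hypotheses of \cite{CLM2012} apply, while keeping the graph isomorphism type intact, is where the genuine work lies; the passage to $P_n$ and the bookkeeping of quasi-isometry constants is then routine.
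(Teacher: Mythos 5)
Your proposal is correct and follows essentially the same route as the paper: use the planar embedding of $\Gamma$ to produce a configuration of connected supports in a punctured planar surface whose intersection pattern realizes $E(\Gamma)$, then invoke \cite[Theorem 1.1]{CLM2012} so that sufficiently high powers generate a quasi-isometrically embedded copy of ${G(\Gamma)}$ inside the pure mapping class group, and finally land in a pure braid group. The only cosmetic differences are that the paper uses pseudo-Anosov classes supported on overlapping punctured disks $D_v$ (with punctures placed in each intersection lens, in each private region, and in the exterior) rather than Dehn twists along curves, and it passes to $P_{n-1}$ via the splitting $P_{n-1}\cong\PMod(S_{0,n})\times\bZ$ rather than working directly in the punctured disk; also note that for the twist version no ``filling'' condition is needed---essential intersection of the annular supports already suffices for the hypotheses of \cite{CLM2012}.
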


Let us denote by $\PMod(S)$ the pure mapping class group on the surface $S$.
We let $S_{g,n}^b$ denote the surface of genus $g$ with $n$ punctures and $b$ boundary components. We write $S_{0,n}$ for $S_{0,n}^0$.

\begin{proof}
Since we have isomorphisms~\cite[Theorem 8]{CLM2014}
\[P_{n-1}\cong \PMod(S_{0,n-1}^1)\cong \PMod(S_{0,n})\times \bZ\]
it suffices to embed ${G(\Gamma)}$ into $\PMod(S_{0,n})$.
Embed $\Gamma$ in the sphere and replace each vertex $v$ of $\Gamma$ with a small disk.  Deform these disks along the edges of $\Gamma$ to get a configuration of disks $\{D_v\}$ in the sphere which are in bijective correspondence with the vertices of $\Gamma$ and which intersect precisely when the corresponding vertices in $\Gamma$ are adjacent.
We require that the boundaries of an overlapping pair of disks intersect at two points, and also that there are no triple intersection of disks.

We now introduce $n$ punctures in the disks as follows: puncture $D_v$ in the interior of each intersection with another disk $D_{u}$.
Then, introduce three punctures in the interior of 
\[D_v\setminus \bigcup_{u\ne v}D_{u}\]  
for each $v$, and also three punctures in the exterior of $\cup_v D_v$.
It is now easy to produce a pseudo-Anosov mapping class $\psi_v$ whose support is the entirety of $D_v$.  Evidently the collection $\{\psi_v\}\subset \PMod(S_{0,n})$ is a collection of nontrivial mapping classes of the multiply-punctured sphere which commute if and only if the corresponding vertices are not adjacent in $\Gamma$, and furthermore no two of these mapping classes generate a cyclic subgroup of $\PMod(S_{0,n})$.  By \cite[Theorem 1.1]{CLM2012}, we see that sufficiently high powers of these mapping classes generate a quasi-isometrically embedded subgroup of $\PMod(S_{0,n})$ isomorphic to ${G(\Gamma)}$. 
\end{proof}

The proof of Corollary \ref{c:braid} part (1) is immediate from Theorem~\ref{t:main} and the above lemma.
We further claim if $m = |V(\Gamma)|$, then ${G(\Gamma)}$ embeds into $P_n$ for 
\[n = 2^{2^{m^2}}.\]
To see this, we first embed ${G(\Gamma)}$ into $G(T)$ for some tree satisfying $|V(T)| \le 2^{2^{(m-1)^2}}$ as in the previous section.
Note from the proof of Lemma~\ref{l:planar} that,
for $G(T)$ to embed into $P_n$ we have only to place three punctures at each vertex and on the exterior, and one puncture at each edge.
So it suffices to consider $n\le |E(T)|+3|V(T)|+3 = 4|V(T)|+2$.
It follows that for $m\ge2$,
\[n\le 4\cdot 2^{2^{(m-1)^2}} +2 \le 2^{2^{m^2}}.\]

\subsection{Area--preserving diffeomorphisms of the 2--sphere}\label{ss:sph}
We let $M=S^2$ and $\cG=\symps$, where $M$ is equipped with the round metric.
For $1\le p\le \infty$ and a vector field $X$ on $M$, we define
\[
\|X\|_p = 
\begin{cases} \left(\int_M |X|^p \; dx\right)^{1/p} &\mbox{if } p<\infty \\ 
\sup_{x\in M} |X(x)|& \mbox{if } p=\infty
\end{cases}
\]
For each path $\alpha\co I\to\cG$, we define the \emph{$L^p$--length} as
\[
\ell_p(\alpha) = \int_0^1 \|\partial \alpha/\partial t\|_p dt.\]
Then we have a non-degenerate right-invariant metric on $\cG$ by:
\[ d_p(\phi,\psi)=\inf \ell_p(\alpha)\]
where the infimum is taken over all paths $\alpha\subseteq \cG$ from $\alpha(0)=\phi$ to $\alpha(1)=\psi$. We let $\|\phi\|_p=d_p(\Id,\phi)$ so that $\|\phi\circ\psi\|_p\le\|\phi\|_p+\|\psi\|_p$; see~\cite[IV.7.A]{AK1998}, \cite[II.3.6]{KW2009}.

Let us fix a set of distinct points $P=\{m_1,m_2,\ldots,m_n\}$ in $M$ for some $n$.
Choose small disjoint closed disks $D_1,D_2,\ldots,D_n$ such that each $D_i$ is centered at $m_i$.
We let $\cP_n(M)$ be the subgroup of $\cG$ consisting of diffeomorphisms which restrict to the identity on $D_1\cup D_2\cup\cdots D_n$. Following~\cite{BS2013}, we let $X_n$ be the configuration space of $n$ distinct points in $M$ with the base point $m=(m_1,\ldots,m_n)$.
The \emph{$n$-strand pure braid group on $M$} is 
$P_n(M)=\pi_1(X_n,m)$. 
If $f\in\cG$ and $x=(x_1,x_2,\ldots,x_n)\in X_n$, we let $f(x)$ mean $(f(x_1),f(x_2),\ldots,f(x_n))$.

From~\cite{Smale1959}, we have homotopy equivalences 
\[
\cG\simeq \operatorname{Diff}^+(M)\simeq SO(3).\]
So the universal cover $p:\tilde\cG\to\cG$ is a two--to--one map.
Recall that $\tilde\cG$ consists of homotopy classes of paths in $\cG$.
Let $\sigma\co I\to SO(3)$ denote the rotational isotopy from $\Id$ to itself by a full rotation.
For each path $\alpha\co I\to \cG$ from $\Id$ to a map $\phi\in\mathcal{P}_n(M)$, we define
$\mu_n(\alpha)\in P_n(M)$ to be the braid represented by $\alpha(I)(m)$.
The map $\mu_n$ lifts to a map from $p^{-1}(\cP_n(M))$ to $P_n(M)$,
which is still denoted as $\mu_n$.
We have a natural commutative diagram
\[
\xymatrix{
\tilde\cG \ar[d]^{p}
& p^{-1}(\cP_n(M))\ar[l]_-{\tiny\mathrm{incl.}}
\ar[r]^{\mu_n}\ar[d]^{p}
& 
P_n(M)
\ar[d]^{h}
\\
\cG
& \cP_n(M)\ar[l]_-{\tiny\mathrm{incl.}}
\ar[r]
&
\PMod(S_{0,n})
}
\]
Note that $h$ is the \emph{point-pushing map}, so $h$ is surjective and $\ker h=\bZ_2$ by the \emph{belt trick}; 
see~\cite[p.251]{FM2012}. 
The group $\PMod(M\setminus P)=\PMod(S_{0,n})$ consists of mapping classes on $M$ fixing each $m_i$ pointwise.
The groups $P_n(M)$ and $\PMod(S_{0,n})$ are given with word metrics.
Suppose $\phi\in\cP_n(M)$. We have $p^{-1}(\phi)=\{[\alpha],[\sigma\cdot\alpha]\}$ 
for some path $\alpha\co I\to \cG$ connecting $\Id$ to $\phi$.
In the above diagram, the square on the right gives
\[
\xymatrix{
p^{-1}(\phi)=\{[\alpha],[\sigma\cdot\alpha]\}
\ar[r]\ar[d]^{p}
& 
\{\mu_n(\alpha),\mu_n(\sigma\cdot\alpha)\}
\ar[d]^{h}
\\
\{\phi\}
\ar[r]
&
\{[\phi]\}
}
\]
Since $h$ is a quasi-isometry,
there exists $C_0>0$ such that 
\[
\frac1C_0 \| [\phi]\| - C_0 \le \min(\|\mu_n(\alpha)\|,\|\mu_n(\sigma\cdot\alpha)\|).\]
Brandenbursky and Shelukhin proved that every finitely generated free abelian group quasi-isometrically embeds into $\symps$; see~\cite{BS2013}. Using an estimate given in~\cite{BS2013}, we prove the following lemma.

\begin{lem}\label{l:est}
For $p>2$, there exists $C=C(n,p)>0$ such that for each $\phi\in\cP_n(M)$,
the word-length $\|[\phi]\|$ in $\PMod(M\setminus P)$
is at most $C\|\phi\|_p+C$.
\end{lem}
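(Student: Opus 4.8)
The plan is to push the problem into the pure braid group $P_n(M)$ through the commutative diagram above, to bound the word length of the braid traced by a generating isotopy in terms of its $L^p$--length, and finally to invoke the area--preserving estimate of \cite{BS2013}. Fix $\phi\in\cP_n(M)$ and, using the definition of $d_p$, choose a path $\alpha\co I\to\cG$ from $\Id$ to $\phi$ with $\ell_p(\alpha)\le\|\phi\|_p+1$. Since $h$ is a quasi-isometry with $h(\mu_n(\alpha))=[\phi]$, the estimate displayed just before the lemma gives $\frac1{C_0}\|[\phi]\|-C_0\le\|\mu_n(\alpha)\|$, hence $\|[\phi]\|\le C_0\|\mu_n(\alpha)\|+C_0^2$. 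Thus everything reduces to finding a constant $C_1=C_1(n,p)$ with
\[\|\mu_n(\alpha)\|\le C_1\,\ell_p(\alpha)+C_1,\]
since then the choice of $\alpha$ closes the argument.

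To estimate $\|\mu_n(\alpha)\|$, recall that $\mu_n(\alpha)$ is the braid class of the motion $t\mapsto(\alpha(t)(m_1),\dots,\alpha(t)(m_n))$. If $X_t$ denotes the time--dependent vector field generating $\alpha$, then the pointwise speed $|X_t(\alpha(t)(m_i))|$ is not controlled by $\|X_t\|_p$ for finite $p$, so rather than a single point I would follow the disk $D_i$, which $\alpha(t)$ carries to the disjoint region $\alpha(t)(D_i)$ of fixed area. Area--preservation and H\"older's inequality give
\[\frac{1}{\operatorname{area}(D_i)}\int_{\alpha(t)(D_i)}|X_t(y)|\,dy\le\operatorname{area}(D_i)^{-1/p}\|X_t\|_p,\]
so the average speed of the $i$-th disk integrates over $t$ to at most $\operatorname{area}(D_i)^{-1/p}\ell_p(\alpha)$. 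Since the braid class of the disk motion agrees with that of the point motion, an upper bound for $\|\mu_n(\alpha)\|$ follows by counting the total turning of pairs of disks, $\|\mu_n(\alpha)\|\le C\sum_{i<j}\int_0^1|\dot\theta_{ij}(t)|\,dt+C$, where $\theta_{ij}(t)$ records the direction between the $i$-th and $j$-th disks and $|\dot\theta_{ij}|$ is comparable to the relative speed divided by $\operatorname{dist}(\alpha(t)(D_i),\alpha(t)(D_j))$.

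I expect the main obstacle to be the control of this turning integral, and this is exactly the point where $p>2$ is indispensable: two disjoint disks of fixed area can nonetheless approach one another, so the denominator $\operatorname{dist}(\alpha(t)(D_i),\alpha(t)(D_j))$ may become small and the integrand large. Bounding $\int_0^1|\dot\theta_{ij}(t)|\,dt$ by $\ell_p(\alpha)$ is the content of the estimate of Brandenbursky and Shelukhin \cite{BS2013}: area--preservation limits how long the disks can remain at small separation, and the resulting H\"older estimate, with exponent conjugate to $p$, converges precisely when $p>2$. Feeding their bound into the previous display produces $\|\mu_n(\alpha)\|\le C_1\ell_p(\alpha)+C_1$.

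Combining the two halves, $\|[\phi]\|\le C_0\|\mu_n(\alpha)\|+C_0^2\le C_0(C_1\ell_p(\alpha)+C_1)+C_0^2\le C\|\phi\|_p+C$ for a constant $C=C(n,p)$, where the last step uses $\ell_p(\alpha)\le\|\phi\|_p+1$. Apart from the near--collision estimate borrowed from \cite{BS2013}, the remaining steps are routine manipulations with the quasi-isometry $h$ and H\"older's inequality.
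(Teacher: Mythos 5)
Your outer reduction coincides with the paper's: choose $\alpha$ with $\ell_p(\alpha)\le\|\phi\|_p+1$, use the quasi-isometry $h$ and the displayed inequality before the lemma, and reduce everything to a bound of the form $\|\mu_n(\alpha)\|\le C_1\ell_p(\alpha)+C_1$. The gap is in how you propose to obtain that bound. You follow the disks $\alpha(t)(D_i)$ and bound the braid word length by the total turning $\sum_{i<j}\int_0^1|\dot\theta_{ij}(t)|\,dt$ of the directions between pairs of disks, with $|\dot\theta_{ij}|$ comparable to relative speed divided by the separation of the disks, and you then assert that bounding this turning integral by $\ell_p(\alpha)$ is ``the content of'' \cite{BS2013} because ``area-preservation limits how long the disks can remain at small separation.'' Both halves of that assertion fail. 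Area-preservation does not keep two disjoint regions apart and does not limit the time they spend close together: an isotopy of finite $L^p$-length can deform $D_j$ into a thin crescent wrapped around the image of $D_i$, after which the two centers of mass coincide (so $\theta_{ij}$ is not even defined) or remain arbitrarily close for all later times, so no estimate $\int_0^1|\dot\theta_{ij}|\,dt\le C\ell_p(\alpha)+C$ along the single trajectory of the disks can hold. Moreover, such a single-trajectory statement is not what Lemmas 1 and 2 of \cite{BS2013} provide; their estimate is the averaged one,
\[
\int_{D_0}\bigl\|[\ell(\alpha,x)]\bigr\|\,dx\le C_2\,\ell_p(\alpha)+C_2,
\]
where $x$ ranges over starting configurations in $D_0=D_1\times\cdots\times D_n$ and $\ell(\alpha,x)=\gamma(x)\cdot\alpha(I)(x)\cdot\gamma(x)^{-1}$ is the associated loop in configuration space.

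The missing idea, which is exactly how the paper closes the argument, is that this averaged bound becomes the pointwise bound you need because the integrand is constant in $x$: for every $x$ in the interior of $D_0$, the braid class $[\ell(\alpha,x)]$ equals $\mu_n(\alpha)$ (slide the base configuration $m$ to $x$ along the componentwise geodesics $\gamma_s(x)$; this is the isotopy $H(s)$ in the paper). Hence $\mathrm{vol}(D_0)\,\|\mu_n(\alpha)\|\le C_2\ell_p(\alpha)+C_2$, and dividing by $\mathrm{vol}(D_0)$ gives the desired $\|\mu_n(\alpha)\|\le C_1\ell_p(\alpha)+C_1$. The role of $p>2$ is also different from what you describe: inside \cite{BS2013} the $1/\mathrm{dist}$ singularity is integrated over the space of configurations, where by area-preservation the set of configurations with $|y_i-y_j|\approx r$ at a fixed time has measure $O(r)$, and H\"older with conjugate exponent $q<2$ requires $1/|y|\in L^q_{loc}(\mathbb{R}^2)$, which is exactly $p>2$; it is not a statement about how long a single pair spends at small separation. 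So the step where you invoke \cite{BS2013} should be replaced by: integrate the word length over starting configurations, observe that the braid class does not depend on the configuration, and divide by the volume of $D_0$.
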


\begin{proof}
Let $\phi\in\cP_n(M)$ and $\alpha\co I\to \cG$ be an isotopy from $\Id$ to $\phi$.
Put \[D_0=D_1\times\cdots D_n\subseteq X_n.\]
Let $x$ be in the interior of $D_0$.
We denote by $\gamma(x)$ the component-wise geodesic from $m$ to $x$,
and define a loop in $X_n$ as 
\[\ell(\alpha,x)=\gamma(x)\cdot \alpha(I)(x)\cdot{\gamma(x)^{-1}}.\]
Let $\gamma_s(x)$ be the restriction of the path $\gamma(x)$ to $[0,s]$.
From a braid isotopy
\[H(s) = \gamma_s(x) \cdot \alpha(I)(\gamma(x)(s))\cdot(\gamma_s(x))^{-1}\]
we see that
 $[\ell(\alpha,x)]=\mu_n(\alpha)$.
 For the point-pushing map $h$,
we have $h(\mu_n(\alpha))=[\phi]$. Hence,
\[\frac1{C_0}\|[\phi]\|-C_0\le \|\mu_n(\alpha)\|=\|[\ell(\alpha,x)]\|.\]
We see from~\cite[Lemmas 1 and 2]{BS2013} that for $p>2$
there exists $C_2>0$ independent of $\phi$ and $\alpha$ satisfying
\begin{equation*}
 \int_{D_0} \|[\ell(\alpha,x)]\| \; dx\le C_2 \ell_p(\alpha)+C_2.
\tag{\#}
\end{equation*}
Finally, we obtain
\[
\mathrm{vol}(D_0)\left(\frac1C_0\| [\phi]\| - C_0\right)
\le
\int_{D_0}\|[\ell(\alpha,x)]\| \;dx \le C_2\ell_p(\alpha)+C_2.\]
Taking the infimum of the right-hand side, we have a desired inequality.
\end{proof}
Let us complete the proof of Corollary~\ref{c:braid} part (3). 
By Theorem~\ref{t:main} and Lemma~\ref{l:planar}, every right-angled Artin group ${G(\Gamma)}$ admits a quasi-isometric group embedding $f_0\co{G(\Gamma)} \to \PMod(S_{0,n})$ for some $n$.
We can choose $\psi_v$ in the proof of Lemma~\ref{l:planar} to be area-preserving; see~\cite[Theorem 12]{CW2007} or~\cite[Lemma III.3.5]{AK1998}.
So $f_0$ factors through $f_1\co {G(\Gamma)}\to \cP_n(M)$ for some $f_1$.
Let $w\in {G(\Gamma)}$. 
By Lemma~\ref{l:est}, there exists $C,C'>0$ such that for every $w\in{G(\Gamma)}$, we have
\[
\frac1{C'}\|w\|-C'\le \|f_0(w)\|=\| [f_1(w)]\| \le C \| f_1(w)\|_p + C.\]
By the following Lemma, we see that $f_1$ is a desired quasi-isometric group embedding.
\begin{lem}\label{l:gp}
Suppose $G$ is a finitely generated torsion-free group with a word-metric $\|\cdot\|$,
and $H$ is a group equipped with a non-degenerate right-invariant metric $d$.
For $h\in H$, we set $\|h\| = d(1,h)$.
If there exists $C>0$ and a group homomorphism $\phi\co G\to H$ such that every $g\in G$ satisfies
\[
\| g\| \le C\| \phi(g)\| + C\]
then $\phi$ is a quasi-isometric group embedding.
\end{lem}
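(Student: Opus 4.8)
The plan is to reduce the two-sided quasi-isometry estimate to a pair of ``norm'' inequalities relating the word length $\|g\|$ in $G$ to $\|\phi(g)\| = d(1,\phi(g))$, and to establish injectivity as a separate matter. First I would record the elementary consequences of right-invariance and non-degeneracy of $d$: writing $\|h\|=d(1,h)$, right-invariance gives $\|h^{-1}\|=\|h\|$ (apply $d(xg,yg)=d(x,y)$ with $g=h$) and subadditivity $\|ab\|\le\|a\|+\|b\|$ (from $d(1,ab)\le d(1,b)+d(b,ab)$ together with $d(b,ab)=d(1,a)$), while non-degeneracy gives $\|h\|=0$ if and only if $h=1$. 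These let me pass freely between distances and norms via $d(\phi(x),\phi(y))=\|\phi(x)\phi(y)^{-1}\|=\|\phi(xy^{-1})\|$, using that $\phi$ is a homomorphism; I would correspondingly use the right-invariant word metric $d_G(x,y)=\|xy^{-1}\|$ on $G$, which is bi-Lipschitz to any other word metric, so that the desired estimate becomes a statement comparing $\|\phi(g)\|$ and $\|g\|$ for $g=xy^{-1}$.

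For the upper (Lipschitz) bound I would fix a finite generating set $S$ of $G$ and put $M=\max_{s\in S}\|\phi(s)\|$. Expressing $g$ as a geodesic word of length $\|g\|$ in $S^{\pm1}$, the homomorphism property together with subadditivity and $\|h^{-1}\|=\|h\|$ yields $\|\phi(g)\|\le M\|g\|$. The lower bound $\|\phi(g)\|\ge \|g\|/C-1$ is a direct rearrangement of the hypothesis $\|g\|\le C\|\phi(g)\|+C$. Combining these and translating back through the identities above produces the two-sided estimate $d_G(x,y)/C'-C'\le d(\phi(x),\phi(y))\le C'd_G(x,y)+C'$ for a suitable $C'$ depending on $C$ and $M$.

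The one genuinely non-formal step — and the place where torsion-freeness is indispensable — is injectivity; this is the step I expect to be the main obstacle, since the lower bound alone shows only that elements of $\ker\phi$ have bounded word length, not that the kernel is trivial. To upgrade this, suppose $\phi(g)=1$. Then $\phi(g^n)=\phi(g)^n=1$ for every $n$, so $\|\phi(g^n)\|=0$ and the hypothesis forces $\|g^n\|\le C$ for all $n\in\bZ$. Since closed balls in the word metric of a finitely generated group are finite, the set $\{g^n:n\in\bZ\}$ is finite, whence $g$ has finite order; torsion-freeness then gives $g=1$, so $\ker\phi$ is trivial. With injectivity established and the two-sided norm estimate in hand, $\phi$ is an injective homomorphism satisfying the defining inequalities of a quasi-isometric group embedding, which completes the argument.
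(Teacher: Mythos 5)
Your proposal is correct and takes essentially the same approach as the paper's proof: the Lipschitz upper bound $\|\phi(g)\|\le \max_{s\in S}\|\phi(s)\|\cdot\|g\|$ from a finite generating set, the lower bound by rearranging the hypothesis, and injectivity by noting that $\phi(g)=1$ forces $\|g^n\|\le C$ for all $n\in\bZ$, so $\{g^n\}$ is finite and torsion-freeness gives $g=1$. One small caveat: your aside that the right-invariant word metric $\|xy^{-1}\|$ is ``bi-Lipschitz to any other word metric'' is false in general (left- and right-invariant word metrics need not be bi-Lipschitz, since conjugation distorts word length), but this does not affect your argument, which---like the paper's---really only needs the norm estimates together with the right-invariant conventions on both sides.
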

\begin{proof}
Suppose $S$ is a finite generating set of $G$ which is used to define the given word-metric.
Then for $g=s_1\cdots s_\ell$ where $s_i\in S$ and $\ell=\|g\|$,
we have $\| \phi(g)\| = \|\prod_i \phi(s_i)\| \le \sup_{s\in S} \|\phi(s)\|\cdot \|g\|$.
So it suffices to show that $\phi$ is injective. Suppose $\phi(g)=1$.
Then for every $n\in \bZ$, we have 
\[
\|g^n\| \le C\|\phi(g)^n\| + C = C.\]
This implies that $\{g^n\co n\in\bZ\}$ is a finite set, and hence, $g=1$.
\end{proof}

\subsection{Area--preserving diffeomorphisms of the 2--disk}\label{ss:disk}
In order to prove Corollary~\ref{c:braid} part (2), we can use the argument in Section~\ref{ss:sph} almost in verbatim by substituting $M=D^2$ and $\cG=\diff$. Here, the $L^p$--metric is defined in the same manner.
This case is even simpler, as $\tilde{\cG}=\cG$ and $P_n\cong\PMod(S^1_{0,n})$ by \cite{Smale1959} and \cite[Theorem 9.1]{FM2012}.
By H\"older inequality, we may assume $p=1$.
The counterpart to Equation (\#) for $p=1$ is given in~\cite[Equations (8,12,13 and 14)]{Brandenbursky2012}, and hence, Lemma~\ref{l:est} again holds in verbatim. 
We remark that  for $p=2$, Equation (\#) is given in~\cite[Lemma 4]{BG2001}.
We will omit the details.

Alternatively, one can combine Theorem~\ref{t:main} with the following theorem of Crisp and Wiest to prove Corollary~\ref{c:braid} part (2) for $p=2$.
A right-angled Artin group is of \emph{planar type} if the defining graph can be realized as the incidence graph of simple closed curves in the plane~\cite{CW2007}.
\begin{thm}[{\cite[Theorem 12]{CW2007}}]\label{t:cw}
Every right-angled Artin group of planar type admits a quasi-isometric group embedding into $(\diff,d_2)$.
\end{thm}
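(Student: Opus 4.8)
The plan is to imitate, in the symplectic category, the configuration-of-curves construction underlying Lemma~\ref{l:planar}, and then to transfer the quasi-isometry estimate from $\PMod$ to $(\diff,d_2)$ exactly as in the disk analogue of Lemma~\ref{l:est}.

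First I would invoke the planar-type hypothesis to realize the vertices of $\Gamma$ as a finite family of smooth simple closed curves $\{c_v\}$ in the interior of $D^2$ whose incidence graph is $\Gamma$; thus $c_u$ and $c_v$ meet precisely when $\{u,v\}\in E(\Gamma)$, i.e. precisely when the generators $u$ and $v$ fail to commute in ${G(\Gamma)}$. Thickening each $c_v$ to a connected subsurface $R_v$ and puncturing the subsurfaces and their complement as in the proof of Lemma~\ref{l:planar}, I would choose for each $v$ an area-preserving pseudo-Anosov diffeomorphism $\psi_v\in\diff$ whose support is exactly $R_v$; such area-preserving representatives exist by the Thurston construction (see \cite[Theorem 12]{CW2007} or \cite[Lemma III.3.5]{AK1998}). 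When $c_u$ and $c_v$ are disjoint the subsurfaces, hence the supports, are disjoint, so $\psi_u$ and $\psi_v$ commute; this yields a homomorphism $\psi\co{G(\Gamma)}\to\diff$.

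Next I would establish the two inequalities of a quasi-isometric embedding. The upper bound $d_2(\Id,\psi(w))\le C\|w\|$ is automatic, since $d_2$ is right-invariant and subadditive and each generator has finite norm. For the lower bound I would pass through the punctured-disk mapping class group: by construction the classes $[\psi_v]\in\PMod$ have connected supports that overlap exactly along the edges of $\Gamma$, so \cite[Theorem 1.1]{CLM2012} applies and shows that sufficiently high powers of the $\psi_v$ generate a quasi-isometrically embedded copy of ${G(\Gamma)}$ inside $\PMod$, giving $\|w\|\le C'\|[\psi(w)]\|+C'$. Since every $\psi(w)$ fixes the punctures, the disk analogue of Lemma~\ref{l:est} (valid for $p=2$ by the integral estimate \cite[Lemma 4]{BG2001} replacing Equation~(\#)) yields $\|[\psi(w)]\|\le C\,d_2(\Id,\psi(w))+C$. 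Chaining these gives $\|w\|\le C''\,d_2(\Id,\psi(w))+C''$, and since right-angled Artin groups are torsion-free, Lemma~\ref{l:gp} upgrades this to a genuine quasi-isometric group embedding.

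The main obstacle is the lower bound, and within it the simultaneous satisfaction of two requirements: the combinatorial demand that the area-preserving, pseudo-Anosov representatives have connected supports whose incidence pattern is exactly $\Gamma$ (so that \cite{CLM2012} applies), and the analytic demand that the $L^2$-energy of an isotopy dominate the $\PMod$-word length of its endpoint. The latter is precisely the content of the Gambaudo--Ghys type estimate \cite[Lemma 4]{BG2001}; verifying its compatibility with the chosen curve-and-puncture configuration, so that the two comparisons can be composed, is where the real work lies. The Lipschitz upper bound and the commutation relations, by contrast, are routine.
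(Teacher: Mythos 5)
You should first note that the paper does not prove this statement at all: Theorem~\ref{t:cw} is quoted from Crisp--Wiest \cite[Theorem 12]{CW2007} and is used only as an alternative route to Corollary~\ref{c:braid} part (2) for $p=2$. So the relevant comparison is with the paper's own proof of the more general Corollary~\ref{c:braid} part (2) in Sections~\ref{ss:sph} and~\ref{ss:disk}, and there your argument is essentially the paper's argument specialized to planar-type graphs: realize the non-commutation pattern by overlapping connected subsurfaces, take area-preserving pseudo-Anosov representatives supported on them (same citations, \cite[Theorem 12]{CW2007} or \cite[Lemma III.3.5]{AK1998}), obtain the lower bound in $\PMod$ from \cite[Theorem 1.1]{CLM2012}, transfer it to $(\diff,d_2)$ via the disk analogue of Lemma~\ref{l:est} with \cite[Lemma 4]{BG2001} in place of Equation (\#), and finish with Lemma~\ref{l:gp}. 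What your route buys is that the planar-type hypothesis is used directly (curves thickened to annuli), so Theorem~\ref{t:main} and the detour through trees are not needed; what the paper's route buys is generality, since after Theorem~\ref{t:main} the same machinery handles \emph{all} right-angled Artin groups, not just planar-type ones. Note also that your proof is genuinely different from Crisp--Wiest's original one, which predates \cite{CLM2012} and establishes the quasi-isometric lower bound into the braid group by hand; your use of \cite{CLM2012} is the same shortcut the paper takes in Lemma~\ref{l:planar}.

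Two small repairs are needed, both of which the paper also makes implicitly. First, \cite[Theorem 1.1]{CLM2012} yields a quasi-isometric embedding only for sufficiently high powers, so the homomorphism must be $v\mapsto \psi_v^N$ for large $N$; as literally defined, your map $\psi\co {G(\Gamma)}\to\diff$ sends $v\mapsto\psi_v$, for which the inequality $\|w\|\le C'\|[\psi(w)]\|+C'$ does not follow (and $\psi$ need not even be injective). Second, for the word-length-versus-$L^2$ comparison of Lemma~\ref{l:est} to apply, the diffeomorphisms must lie in $\cP_n(D^2)$, i.e.\ restrict to the identity on small disks about the marked points rather than merely fix the punctures; since a pseudo-Anosov representative cannot fix an open set, one must choose a different representative in each isotopy class that is the identity near the marked disks before running the Moser-type area-preservation correction. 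Neither repair changes the structure of your argument.
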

\subsection{Two--generated subgroups of right-angled Artin groups}
In \cite{LM2010}, Leininger and Margalit show the following structure theorem for two--generated subgroups of pure braid groups:

\begin{thm}[\cite{LM2010}]
Every two--generator subgroup of a pure braid group is either abelian or free.
\end{thm}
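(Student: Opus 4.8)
The plan is to realize $P_n$ as the pure mapping class group of the $n$--punctured disk $D_n$ and argue through Nielsen--Thurston theory. Since the pure braid group is torsion--free, every nontrivial element has infinite order and is either pseudo-Anosov on its support or reducible; to each $f\in P_n$ I would attach its canonical reduction system $\sigma(f)$ together with the \emph{active subsurfaces}, the components of the cut surface on which $f$ restricts to a pseudo-Anosov map. Given $G=\langle a,b\rangle$, the whole argument is organized around a dichotomy for how $a$ and $b$ interact: either the two generators commute --- with supports that are realized disjointly or nestedly and with the same foliations on any shared active piece --- or they are \emph{independent}, meaning that on some subsurface their restrictions are pseudo-Anosov maps with distinct stable/unstable foliations, possibly because the supports themselves overlap transversally. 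I would show that the commuting alternative forces $G$ to be abelian and that independence forces $G$ to be free.

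For the independent case I would run a ping--pong argument. The geometric input is the classical fact that two pseudo-Anosov maps carried by the same subsurface either share their foliations --- in which case the group they generate is virtually cyclic, hence, being inside a torsion--free group, infinite cyclic --- or else are independent, so that suitable elements act on the space $\mathcal{PML}$ of projective measured foliations with disjoint attracting and repelling neighborhoods and play ping--pong. When the overlap of supports is only partial, I would replace $\mathcal{PML}$ of the whole surface by subsurface projections and the Masur--Minsky coarse geometry. The delicate point here is that the naive ping--pong yields a free basis only after passing to high powers, which changes the group; to conclude that $a$ and $b$ \emph{themselves} generate freely I would exploit the canonicity and conjugation--equivariance of $\sigma(\cdot)$ to track the original generators rather than their powers, so that no hidden relation is introduced.

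In the commuting case the simultaneous realization of the supports cuts $D_n$ along a multicurve into pieces on which at most one generator is active or on which both act with matching foliations, and a standard combination argument then assembles $G$ out of cyclic and commuting blocks, yielding an abelian group. The step I expect to be the main obstacle is excluding every intermediate two--generator group --- a one--relator group that is neither free nor abelian, such as a Klein bottle group $\langle x,y\mid xyx^{-1}=y^{-1}\rangle$ or a Baumslag--Solitar group $\langle x,y\mid xyx^{-1}=y^{m}\rangle$. The Tits alternative for mapping class groups only gives ``contains a rank--two free group or is virtually abelian,'' which is far too coarse to pin down the clean trichotomy. To close this gap I would invoke that the pure braid group is residually torsion--free nilpotent (Falk--Randell): any relation of the above type forces a nontrivial power of $y$ into every term of the lower central series, hence into its trivial intersection, a contradiction. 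This simultaneously upgrades the virtually abelian case to genuinely abelian and certifies that any failure of commutativity must be realized as an honest free splitting rather than a concealed relation.
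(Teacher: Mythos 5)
First, a point of orientation: the paper does not prove this statement at all --- it is quoted from Leininger--Margalit \cite{LM2010} and used as a black box, together with Theorem \ref{t:main}, to rederive Baudisch's theorem (Corollary \ref{c:baud}). So your proposal must be measured against the actual proof in \cite{LM2010}, and it falls short in exactly the places where that proof does its real work. The most basic problem is that your dichotomy is not exhaustive. Two non-commuting elements of $P_n$ need not admit \emph{any} subsurface on which both restrict to pseudo-Anosov maps: for instance, two multitwists about intersecting multicurves have no pseudo-Anosov pieces whatsoever, so your ``independent'' alternative (distinct stable/unstable foliations on a shared active piece) never triggers, and ping-pong on $\mathcal{PML}$ with attracting/repelling foliations has nothing to act on. Handling such pairs is the heart of the matter; it uses facts special to planar surfaces --- every simple closed curve in a punctured sphere or disk is separating, so geometric intersection numbers are even, hence at least $2$ when nonzero --- combined with theorems of Ishida and Hamidi-Tehrani type asserting that twists about curves intersecting at least twice generate a free group \emph{on the nose}. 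None of this appears in your outline. Second, the passing-to-powers problem, which you correctly flag as the delicate point, is not solved by appealing to ``canonicity and conjugation-equivariance of $\sigma(\cdot)$'': equivariance of the canonical reduction system produces no ping-pong domains for the original generators and no mechanism for excluding a hidden relation. Resolving this without replacing $a,b$ by powers is precisely what occupies most of \cite{LM2010}; a sentence gesturing at canonicity cannot substitute for it.

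Third, your closing step rests on a misconception about what remains to be proved. Excluding the Klein bottle group and $\langle x,y \mid xyx^{-1}=y^m\rangle$ via residual torsion-free nilpotence of $P_n$ is fine as far as it goes (Falk--Randell plus the lower-central-series computation you sketch does rule these out, and it does upgrade torsion-free virtually abelian to abelian here), but it does not ``close the gap'' left by the Tits alternative. A two-generator group that is neither abelian nor free need not be a one-relator group of Baumslag--Solitar or Klein bottle type; there are uncountably many two-generator groups, and no finite list of forbidden subgroups can replace a direct proof that every non-commuting pair generates freely. Knowing that $\langle a,b\rangle$ \emph{contains} a rank-two free group, or that it avoids certain named pathologies, says nothing about whether $a$ and $b$ themselves satisfy a relation. (Also, a minor simplification: in your ``commuting'' case no combination argument is needed --- if the two generators commute, the group they generate is abelian by definition.) In short, the skeleton you propose (Nielsen--Thurston theory, canonical reduction systems, ping-pong) points in the same general direction as \cite{LM2010}, but the three load-bearing steps --- covering the twist-only case, avoiding powers, and converting ``no bad subgroups'' into ``free'' --- are each either missing or based on an argument that does not exist.
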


In \cite{Baudisch1981}, Baudisch established the same fact for two--generated subgroups of right-angled Artin groups.  Baudisch's proof uses somewhat involved combinatorial arguments.
Combining Leininger and Margalit's result with our main theorem, we obtain a more transparent proof of Baudisch's result:

\begin{proof}[Proof of Corollary \ref{c:baud}]
Each right-angled Artin group embeds into a pure braid group, in which every two--generated subgroup is either abelian or free.
\end{proof}

\subsection{Hyperbolic manifolds}
It is known that every word-hyperbolic Coxeter group is virtually special~\cite{HW2010}. 
Examples of discrete cocompact hyperbolic reflection groups are known to exist up to dimension eight by the work of Bugaenko~\cite{Bugaenko1990}, \cite{Bugaenko1992}. By Corollary~\ref{c:vsp}, there exist examples of closed hyperbolic $n$--manifolds whose fundamental groups admit quasi-isometric group embeddings into pure braid groups for $n\le 8$.

More concretely, the commutator group of a right-angled Coxeter group is special~\cite{Droms2003}, \cite{Davis2008}.
In particular, the commutator subgroup of the reflection group of the all-right 120--cell in $\mathbb{H}^4$ provides a specific example of a closed hyperbolic 4--manifold group which is special~\cite{CW2007}. 
Theorem~\ref{t:main} implies that this 4--manifold group admits a quasi-isometric group embedding into a pure braid group.

For an arbitrary $n$, the existence of closed hyperbolic $n$--manifold groups inside of right-angled Artin groups follows from the more recent work of Bergeron--Wise and Bergeron--Haglund--Wise~\cite{BW2012,BHW2011}.

\subsection{The isomorphism, conjugacy, and membership problem for finitely presented subgroups of braid groups}
The proofs of Corollaries \ref{c:isoprob} and \ref{c:membership} are straightforward from the work of Bridson, after combining it with Theorem \ref{t:main}.

\begin{thm}[\cite{Bridson2012}, Theorems 1.1 and 1.2]\label{t:bridson}
There exists a right-angled Artin group $A_1$ such that the isomorphism problem for finitely presented subgroups of $A_1$ is unsolvable.  Furthermore, there exists a right-angled Artin group $A_2$ and a finitely presented subgroup $H<A_2$ such that the conjugacy and membership problems for $H$ are unsolvable.
\end{thm}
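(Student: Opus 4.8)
The plan is to prove Theorem~\ref{t:bridson} by combining a \emph{special} version of the Rips construction, which outputs word-hyperbolic groups that embed into right-angled Artin groups, with the fiber-product machinery of Baumslag--Bridson--Miller--Short and of Miller, following the strategy of Bridson \cite{Bridson2012}. First I would fix a finitely presented seed group $Q$ of type $F_3$ (one may in fact take $Q$ of type $F_\infty$) whose word problem is unsolvable; the existence of such a group follows from the Novikov--Boone theorem together with standard constructions that upgrade finite presentability to higher finiteness properties. For the conjugacy and membership statement a single such $Q$ suffices, while for the isomorphism statement I would additionally fix a recursive enumeration $w_1, w_2, \ldots$ of words in the generators of $Q$ for which $\{n : w_n = 1 \text{ in } Q\}$ is recursively enumerable but not recursive.

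Next I would run $Q$ through the special Rips construction of Wise and Haglund--Wise \cite{Wise2011, HW2008}, producing a short exact sequence $1 \to N \to \Gamma \to Q \to 1$ in which $\Gamma$ is word-hyperbolic and compact special and $N$ is finitely generated. Because $\Gamma$ is compact special it embeds into a right-angled Artin group, and hence $\Gamma \times \Gamma$ embeds into a direct product of two right-angled Artin groups, which is itself a right-angled Artin group. This supplies the ambient groups $A_1$ and $A_2$. Crucially, I would keep $\Gamma$ fixed, so that all the subgroups witnessing unsolvability live inside the single group $\Gamma \times \Gamma$ and hence inside one fixed right-angled Artin group.

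The witnessing subgroups are the fiber products $P = \{(a,b) \in \Gamma \times \Gamma : \phi(a) = \phi(b)\}$ attached to the quotient $\phi \co \Gamma \to Q$, and, in the isomorphism case, to the family of quotients $\Gamma \to Q/\langle\langle w_n \rangle\rangle$. Since $N$ is finitely generated, $\Gamma$ is finitely presented, and $Q$ is of type $F_3$, the 1--2--3 Theorem guarantees that each such $P$ is finitely presented, and therefore is a finitely presented subgroup of the right-angled Artin group constructed above. The undecidability then transfers by elementary reductions: for the membership problem, $(g,1) \in P$ holds if and only if $\phi(g) = 1$ in $Q$, so the unsolvable word problem of $Q$ yields an unsolvable membership problem for $H = P < \Gamma \times \Gamma$ — this is the fiber-product refinement of Mihailova's theorem — and the conjugacy problem is handled by a parallel reduction of conjugacy of suitably chosen elements of $P$ to the word problem of $Q$.

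The main obstacle is the isomorphism statement. Following Miller and Bridson--Miller, one must arrange the encoding so that the isomorphism type of the finitely presented subgroup attached to $w_n$ \emph{detects} whether $w_n = 1$ in $Q$, typically by ensuring that this subgroup is isomorphic to a fixed reference group precisely when $w_n$ is trivial. The delicate point is to rule out \emph{accidental} isomorphisms between the subgroups attached to inequivalent instances; this requires controlling the structure of the fiber products precisely and exploiting the rigidity inherited from the hyperbolicity and specialness of $\Gamma$, so that any algorithm deciding isomorphism among the finitely presented subgroups of $A_1$ would decide the undecidable set $\{n : w_n = 1\}$, a contradiction.
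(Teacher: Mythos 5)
You should first note how this statement functions in the paper: Theorem~\ref{t:bridson} is quoted verbatim from Bridson \cite{Bridson2012} and is \emph{not} proved here --- the authors use it as a black box, and their ``proof'' of Corollaries~\ref{c:isoprob} and~\ref{c:membership} is just the observation that $A_1$ and $A_2$ embed in $B_n$ by Theorem~\ref{t:main}. Measured against the cited source rather than against this paper, your outline does reconstruct Bridson's actual strategy: a special version of the Rips construction (Haglund--Wise, Wise) producing $1\to N\to\Gamma\to Q\to 1$ with $\Gamma$ (virtually) compact special and hyperbolic, the Baumslag--Bridson--Miller--Short fiber products with finite presentability supplied by the 1--2--3 Theorem, the Mihailova-style reduction $(g,1)\in P\iff \phi(g)=1$ for membership, the BBMS reduction for conjugacy, and the insistence that all witnessing subgroups live in the single ambient group $\Gamma\times\Gamma$, hence in one right-angled Artin group. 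The conjugacy/membership half of your argument is sound as an outline and is essentially the intended route.

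The isomorphism half, however, has a genuine gap, in two places. First, your appeal to the 1--2--3 Theorem is made with hypotheses attached to the wrong map: for the family of fiber products you must work over $\phi_n\co\Gamma\to Q/\langle\langle w_n\rangle\rangle$, whose kernel is the preimage of $\langle\langle w_n\rangle\rangle$ (not $N$), and whose target $Q/\langle\langle w_n\rangle\rangle$, while finitely presented, need \emph{not} be of type $F_3$ when $w_n\neq 1$; so finite presentability of these fiber products is not established by what you wrote. The template Bridson imports (from BBMS and Miller) uses a recursive sequence of seed groups engineered to be uniformly of type $F_3$ (built from aspherical, e.g.\ free-by-free, constructions) with the condition ``$\cong$ a fixed reference group iff $w_n=1$'' built in --- not the bare quotients $Q/\langle\langle w_n\rangle\rangle$. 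Second, your final paragraph defers exactly the crux: showing that $P_n\cong P_0$ \emph{only if} $w_n=1$, i.e.\ ruling out accidental isomorphisms, is the hard rigidity step (one shows an abstract isomorphism of these subdirect products induces an isomorphism of the quotient data, using hyperbolicity of $\Gamma$ and the characterization of the factor intersections), and as written you state the difficulty rather than resolve it. A smaller point in the same spirit: the Rips construction you invoke most safely yields \emph{virtually} special $\Gamma$; passing to a finite-index subgroup replaces $Q$ by a finite-index subgroup, so you should record that an unsolvable word problem and type $F_3$ persist under passage to finite index (they do). With those repairs your proposal would match the cited proof; without them the isomorphism statement is not yet proved.
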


\begin{proof}[Proofs of Corollaries \ref{c:isoprob} and \ref{c:membership}]
Let $A_1$ and $A_2$ be as in Theorem \ref{t:bridson}.  Then Theorem \ref{t:main} implies that $A_1$ and $A_2$ embed into $B_n$ for every sufficiently large $n$.
\end{proof}

\bibliographystyle{amsplain}
\bibliography{./ref}

\end{document}